\definecolor{dred}{rgb}{0.6,0.0,0.0}
\def\TT{\mathcal{T}}
\def \MATLAB {\textsc{matlab}}
\def\trace{\mbox{\tt tr}}
\newtheorem{lemma}{Lemma}[section]
\newtheorem{proposition}{Proposition}[section]
\newtheorem{figr}{Figure}[section] 
\def\myrefs#1#2{ 
{\bigskip \noindent
{\Large \bf #2}  
 \list {[\arabic{enumi}]}{\settowidth\labelwidth{[#1]}
 \leftmargin\labelwidth 
 \advance\leftmargin\labelsep
 \usecounter{enumi} }  
 \def\newblock{\hskip .11em plus .33em minus .07em}
 \sloppy\clubpenalty4000\widowpenalty4000
 \sfcode`\.=1000\relax}  }
\newenvironment{proof}{\begin{trivlist}
                       \item[]{\bf Proof.}
                       \hspace{0cm} }{\hfill $\Box$
                       \end{trivlist}}
\newcommand{\ab}{[a, \ b]}
\def\inv{^{-1}}%
\def\nref#1{(\ref{#1})}
\def\comb#1,#2,{ \left( {#1 \atop #2 } \right)  }%
\def\prodd#1,#2,#3,{ \prod_{\scriptstyle #1 \atop\scriptstyle #2 }^{#3} }%
\def\summ#1,#2,#3,{ \sum_{\scriptstyle #1 \atop\scriptstyle #2 }^{#3} }%
\def\upp#1{^{({#1})}} %
\newtheorem{algor}{{\sc Algorithm}}[section]
\newtheorem{tabl}{Table}[section]
\def\betab{\begin{tabbing} 
xxxx\=xxxx\=xxx\=xx\=xx\=xx\=xx\=xx\=xx\=xx\=xx\=xx\=xx\= \kill} 
\def\entab{\end{tabbing}\vspace{-0.12in}}
\newcommand{\eq}[1]{\begin{equation}\label{#1}}
\newcommand{\en}{\end{equation}}
\newcommand{\beeq}[1]{\begin{equation}\label{#1}}
\newcommand{\eneq}{\end{equation}}
\title{Efficient estimation of eigenvalue counts in an interval}
\author{Edoardo Di Napoli
\thanks{J\"ulich Supercomputing Centre, Forschungszentrum J\"ulich, D-52425 J\"ulich. 
        {\tt e.di.napoli@fz-juelich.de}.
Work of this author was supported by VolkswagenStiftung,
       the Excellence Initiative of the German federal and state governments 
       and the J\"ulich Aachen Research Alliance -- High-Performance Computing.}
\and
Eric Polizzi
\thanks{Department of Electrical and Computer Engineering,
        University of Massachusetts, Amherst. 
{\tt polizzi@ecs.umass.edu}. Work of this author was supported by the
       National Science Foundation under Grant \#ECCS-0846457.}
\and
Yousef Saad 
\thanks{Computer Science \& Engineering,
        University of Minnesota, Twin Cities. 
{\tt saad@cs.umn.edu}. 
Work of this author 
was supported by        the Scientific Discovery through Advanced Computing (SciDAC) program funded by
       U.S. Department of Energy, Office of Science, Advanced Scientific Computing Research and Basic Energy
       Sciences DE-SC0008877}
} 
\date{\today} 
\begin{document} 

\maketitle 

\begin{abstract}
  Estimating the number of eigenvalues  located in a given interval of
  a large sparse  Hermitian matrix is an important  problem in certain
  applications and  it is  a prerequisite of  eigensolvers based  on a
  divide-and-conquer paradigm.  Often an  exact count is not necessary
  and methods based  on stochastic estimates can be  utilized to yield
  rough approximations.   This paper  examines a number  of techniques
  tailored to this specific  task.  It reviews standard approaches and
  explores  new ones  based on  polynomial and  rational approximation
  filtering combined with a stochastic procedure.
\end{abstract} 


\section{Introduction}
Recent efforts to develop alternative eigensolvers
~\cite{Schofield-al-2011,FEAST,SakSig03} for large scale scientific
applications rely on ``splitting'' the spectrum of an eigenproblem in
intervals and extracting eigenpairs from each one independently. In
order to be efficient, this strategy requires an approximate knowledge
of the number of eigenvalues included in each of these intervals.  In
general, any algorithm based on a form of subspace iteration for
computing eigenvalues in a given interval benefits from the knowledge
of the approximate number of eigenvalues inside that interval so as to
select the dimension of the subspace to use in the iteration. In this
context subspace size selection was already addressed by Sakurai and
co-workers,
\cite{FutamuraSakuraiEigCnt,Senzaki-al-eigCnt-2010,Maeda-al-eigCnt-2011},
using a few different methods.  An approximate count of the
eigenvalues located in an interval can also help estimate the rank of
a matrix and this is needed in the context of sampling-based methods
\cite{halko-al-survey-11,Mahoney-book}.  The goal of this paper is to
explore inexpensive algorithms for determining the number of
eigenvalues of a Hermitian matrix that are located in a given
interval.

The standard way of computing the number of eigenvalues 
of a Hermitian matrix $A$ located inside
an interval $\ab$ is to resort to the Sylvester law of
inertia~\cite{GVL-book}.  For the sake
of brevity this introduction will only  discuss the standard eigenvalue problem
although the paper deals with both standard and
generalized problems. 
If $A$ is nonsingular, it admits the
decomposition $A = L D L^T$, where $L$ is unit lower triangular, and
$D$ is diagonal. The Sylvester inertia theorem then states that the
inertias of $A$ and $D$ are the same. This means that the number of
eigenvalues of $A$ that are positive is the same as the number of
positive entries in the diagonal of $D$ (Sturm count). Thus, the
$LDL^T$ factorizations for the shifted matrices $A - a I$ and $A - b
I$ (assuming that these exist) yield respectively the number of
eigenvalues larger than $a$ and $b$. The difference between these two
numbers gives the eigenvalue count $\mu_{\ab}$ in $\ab$. While
this method yields an exact count, it requires two complete $LDL^T$
factorizations and this can be quite expensive for realistic
eigenproblems.

This paper discusses two alternative methods which provide only an
estimate for $\mu_{\ab}$ but which are relatively inexpensive. Both
methods work by estimating the trace of the spectral projector $P$
associated with the eigenvalues inside the interval $\ab$.  This
spectral projector is expanded in two different ways and its trace is
computed by resorting to stochastic trace estimators, see,
e.g.,\cite{Hutchinson-est,JTangYS}.  The first method utilizes
filtering techniques based on Chebyshev polynomials. The resulting
projector is expanded as a polynomial function of $A$.  In the second
method the projector is constructed by integrating the resolvent of
the eigenproblem along a contour in the complex plane enclosing the
interval $\ab$. In this case the projector is approximated by a
rational function of $A$.  

For each of the above methods we present various implementations
depending on the nature of the eigenproblem (generalized vs standard),
and cost considerations.  Thus, in the polynomial expansion case, we
propose a barrier-type filter when dealing with a standard
eigenproblem, and two high/low pass filters in the case of generalized
eigenproblems. In the rational expansion case we have the choice of
using an LU factorization or a Krylov subspace method to solve linear
systems. The optimal implementation of each method used for the
eigenvalue count depends on the situation at hand and involves
compromises between cost and accuracy. 
While it is not the aim of this
paper to explore detailed analysis of these techniques, we will
discuss various possibilities and provide illustrative examples.

The polynomial and rational expansion methods are motivated by 
two distinct approaches recently suggested in the context of
electronic structure calculations: i) spectrum slicing and ii) Cauchy
integral eigen-projection. In the spectrum slicing techniques
\cite{Schofield-al-2011} the eigenpairs are computed by dividing the
spectrum in many small subintervals, called `slices' or `windows'.
For each window a barrier function is approximated by
Chebyshev-Jackson polynomials in order to select only the portion of
the spectrum in the slice. In this method, it is important to
determine an approximate count of the eigenvalue in each sub-interval
in order to balance the calculations in a parallel
implementation. 

The second set of methods is based on eigen-projectors
expressed in the form of Cauchy integrals~\cite{FEAST,SakSig03}.
They essentially compute an orthonormal basis of the invariant
eigenspace ${\cal V}$ associated with the eigenvalues located in the
interval. For these methods to work efficiently one must have a good
idea of the dimension of the subspace. This dimension must not be
smaller than that of ${\cal V}$ if we are to account for all the
eigenvalues inside the interval $\ab$ and, for reasons related to
computational costs, it should also not be too large.

The paper is organized as follows.
Section~\ref{sec:eigcount} introduces the eigenvalue count problem
and gives an overview of traditional approaches for solving it.
Section~\ref{sec:polynomial} discusses methods based on
polynomial expansions and 
Section~\ref{sec:rational} is devoted to methods based on
rational function expansions. Section~\ref{sec:numtests} 
presents additional issues and provides a series of numerical
tests to illustrate the behavior of  the methods.
Finally, Section~\ref{sec:concl} offers some concluding remarks.

\section{Eigenvalue counts}
\label{sec:eigcount}
Let $\lambda_j,\ j=1,\cdots,n$ be the eigenvalues, labeled by
increasing value, and $u_1, u_2, \cdots, u_n$ the associated
orthonormal eigenvectors of an Hermitian matrix $A$ (the generalized
problem will be discussed later). Assuming that $\lambda_1 \le a < b
\le \lambda_n$, our aim is to count the number of eigenvalues
$\lambda_i$ in the interval $\ab$. As described in the introduction,
the standard way of obtaining this count is to resort to the Sylvester
inertia theorem which will require two $LDL^T$ factorizations. Since
exact factorizations can be computationally expensive, choosing the
correct implementation of the $LDL^T$ factorization is crucial.

In modeling Hamiltonians of 2-dimensional (2-D) physical systems using
finite differences sparse factorizations can be quite effective so
that an eigenvalue count based on the inertias may be the method of
choice. However, this approach becomes expensive in realistic cases
where the matrix arises from simulations of 3-dimensional (3-D)
phenomena. As is well-known \cite{George-Liu-book,TDavis-book}, in the
3-D case the factorization becomes very costly both in terms of
storage and arithmetic, due to the amount of fill-in generated.  For
dense eigenproblems the number of floating point operations per
factorization is of order $O(n^3)$ and this becomes prohibitive for
large matrices. Hence, counting eigenvalues based on the inertia
theorem is a viable method only when dealing with fairly small dense
matrices or for sparse matrices whose factorization is not too costly,
e.g., those generated by 2-D models.

The problem of eigenvalue counts is also closely related to that of
computing ``Density of States'' (DoS) a term used by physicists for
the `spectral density' or the probability of finding an eigenvalue at
a given point in the real line.  Some of these techniques bear some
similarity with the ones described here \cite{LinYangSaad13-TR}. For
example, one can view the polynomial-based techniques presented in
this paper as an adaptation of the Kernel Polynomial Method for
computing the DoS to the problem of estimating eigenvalue counts
\cite{SilverRoederAl,Wang-DOS,Roder-al-96,SilverRoder1994}.

This paper explores two alternative approaches that compute an
estimate of the eigenvalue count in the interval $\ab$ by seeking an
approximation to the trace of the eigen-projector: \eq{eq:proj} P =
\sum_{\lambda_i \ \in \ \ab} u_i u_i^T .  \en The eigenvalues of a
projector are either zero or one and so the trace of $P$ is equal to
the number of terms in the sum \nref{eq:proj}, i.e.~to the number of
eigenvalues in $\ab$.  Therefore, we can calculate the number of
eigenvalues $\mu_{\ab}$ located in the interval $\ab$ by evaluating
the trace of the related projector \nref{eq:proj}:
\[
\mu_{\ab} = \trace (P ) \ .
\]
If $P$ were available explicitly, we would be able to compute its
trace directly and obtain $\mu_{\ab}$ exactly.  The projector $P$ is
typically not available in practice but it is possible to
inexpensively approximate it in the form of either a polynomial or a
rational function of $A$. To this end, we can interpret $P$ as a step
function of $A$, namely:
\begin{eqnarray}
\label{eq:hoft} 
P = h(A) \quad \mbox{where} \quad h(t) = \left\{\begin{array}{l l}
1  & \quad \textrm{if}\ \ t \ \in \ \ab\\
0  & \quad \textrm{otherwise}\\
\end{array}\right. .
\end{eqnarray}
One can now approximate $h(t)$ with either a finite sum $\psi(t)$ of
Chebyshev polynomials or a closed line integration of a rational
function $\chi(t)$ on the complex plane. These two approaches lead to
distinct approximations of the projector $P$, namely $P \approx
\psi(A)$ or $P \approx \chi(A)$.  In this form, it becomes possible to
estimate the trace of $P$ by a so-called stochastic estimator 
developed by Hutchinson~\cite{Hutchinson-est} and further improved
more recently~\cite{JTangYS, Iitaka:705693, Wong:2004tm}.

Hutchinson's unbiased estimator uses only matrix-vector products to
approximate the trace of a generic matrix $A$.  The idea is based on
the use of identically independently distributed (i.i.d.) Rademacher
random variables whereby each entry of randomly generated vectors $v$
assumes the values $-1$ and $1$ with equal probability ${1 \over 2}$.
Hutchinson proved in a lemma that $E(v^\top A v) = \trace (A)$.  Thus,
an estimate $\TT_{n_v}$ of the trace $\trace (A)$ can be obtained by
generating $n_v$ samples of random vectors $v_k,\ k=1,..,n_v$ and
computing the average of $v_k^\top A v_k $ over these samples.

\begin{equation}
\label{eq:trace_estimator}
\trace (A) \approx \TT_{n_v} =  {1 \over n_v}  \sum_{k=1}^{n_v} v_k^\top A v_k.
\end{equation}

In practice there is no need to take vectors with entries equal to
Rademacher random variables. Any sequence of random vectors $v_k$
whose entries are i.i.d.~random variables will do as long as the mean
of their entries is zero~\cite{Bekas-al-DIAGEST}\footnote{This form
  was used by physicists to compute the density of
  states~\cite{SilverRoederAl,Wang-DOS,Roder-al-96,SilverRoder1994}.}.
For example one can use normally distributed variables and define the
Gaussian estimator exactly in the same fashion as in
\eqref{eq:trace_estimator}. While the variance of such an estimator is
larger than that of Hutchinson, which uses Rademacher vectors, it
shows a better convergence to the trace, in terms of the number of
sample vectors $n_v$~\cite{Avron:2011hg}.

Instead of the variance of the estimator, a more meaningful criterion
is a less than $\delta$ probability that the estimator computes a
value $\TT_{n_v}$ for $\trace (A)$ whose relative error exceeds
$\epsilon$, i.e.
\begin{equation}
{\rm Pr}\left(|\TT_{n_v} - \trace(A)| \ge \epsilon \ \trace(A)\right) \le \delta.
\end{equation}
In the particular case the matrix $A$ is a projector $P$ its trace
assumes only integers values and the rounded off value of
$\TT_{n_v}$ will be indistinguishable from $\trace(P)$ when
their difference is less then ${1 \over 2}$. In this particular case
$\epsilon \equiv 1 / (2\ \trace(P))$ and the following lemma
holds~\cite{Avron:2011hg}
\begin{lemma}
\label{th:toledo}
Let be $P \in \mathbb{R}^{n \times n}$ a projection matrix and let $\delta > 0$ be a failure probability. Then for $n_v \ge 16 \ \trace(P) \ln ({2 \over \delta})$, the estimator $\TT_{n_v}$ of $P$ satisfies
\[
{\rm Pr}\left(\lfloor \TT_{n_v}\rceil \neq \trace(P)\right) \le \delta.
\]
\end{lemma}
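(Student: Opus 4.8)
The plan is to reduce the rounding statement to a two-sided relative-error bound and then invoke the concentration of the Gaussian quadratic form $v^\top P v$, exploiting the fact that $P$ is a projector. First I would observe that, because $P$ is a projection matrix, $\trace(P)$ is a nonnegative integer, so the rounded estimate $\lfloor\TT_{n_v}\rceil$ fails to equal $\trace(P)$ exactly when $\TT_{n_v}$ lands at least $\half$ away from this integer. Hence
\[
{\rm Pr}\left(\lfloor\TT_{n_v}\rceil\neq\trace(P)\right)\;\le\;{\rm Pr}\left(|\TT_{n_v}-\trace(P)|\ge\half\right),
\]
and it suffices to bound the right-hand side by $\delta$. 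Written as a relative error, the target event is precisely $|\TT_{n_v}-\trace(P)|\ge\epsilon\,\trace(P)$ with $\epsilon=1/(2\,\trace(P))$, i.e.\ the $\half$ gap forces $\epsilon\,\trace(P)=\half$; this is the regime for which the Avron--Toledo analysis~\cite{Avron:2011hg} is tailored.

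The second step is to identify the distribution of each sample and to use the projector structure. For a Gaussian test vector $v$ with i.i.d.\ standard-normal entries, $v^\top P v=\sum_i\lambda_i z_i^2$, where the $\lambda_i$ are the eigenvalues of $P$ and the $z_i$ are i.i.d.\ standard normals. Since $P$ is a projector its eigenvalues lie in $\{0,1\}$, so $v^\top P v$ is a chi-square variable with $\trace(P)$ degrees of freedom; equivalently $\|P\|_F^2=\trace(P^2)=\trace(P)$, so the ``stable rank'' $\trace(P)^2/\|P\|_F^2$ collapses to $\trace(P)$. This is the structural fact that turns the generic $\Or(\epsilon^{-2})$ sample count, which would scale like $\trace(P)^2$, into one that is only linear in $\trace(P)$.

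Next I would apply a Chernoff / moment-generating-function argument to the sum $n_v\TT_{n_v}=\sum_{k=1}^{n_v}v_k^\top P v_k$, which is chi-square with $N=n_v\,\trace(P)$ degrees of freedom. Optimizing the exponential Markov bound over $\theta\in(0,\half)$ via the chi-square MGF $(1-2\theta)^{-N/2}$ yields a per-sample decay rate of order $1/(16\,\trace(P))$ for a deviation of $\half$; combining the two tails with a union bound gives
\[
{\rm Pr}\left(|\TT_{n_v}-\trace(P)|\ge\half\right)\;\le\;2\exp\!\left(-\frac{n_v}{16\,\trace(P)}\right),
\]
and equating the right-hand side to $\delta$ produces the stated threshold $n_v\ge 16\,\trace(P)\ln(2/\delta)$, the factor $2$ arising from splitting $\delta$ between the upper and lower tails.

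The main obstacle I anticipate is securing the logarithmic-in-$\delta$ dependence together with the sharp constant $16$. A plain variance computation gives $\mathrm{Var}(v^\top P v)=2\,\trace(P)$, and Chebyshev's inequality then yields only polynomial decay in $1/\delta$; the exponential concentration required here must come from the chi-square tail (a sub-gamma / Bernstein estimate such as the Laurent--Massart bound). Tracking the constant exactly requires the small-$\epsilon$ expansion of the per-sample rate function $-\theta(m+\half)-\tfrac{m}{2}\ln(1-2\theta)$ about its optimizer $\theta^{*}=1/(2(2m+1))$, with $m=\trace(P)$: the leading term is $-1/(16m)$ and the corrections are $\Or(1/m^{2})$, so the delicate point is verifying that these sub-dominant terms do not degrade the clean threshold. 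This is exactly what is carried out in~\cite{Avron:2011hg}, so the lemma follows by substituting $\epsilon=1/(2\,\trace(P))$ into that result.
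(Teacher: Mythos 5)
The paper does not prove this lemma at all: it is quoted from Avron and Toledo \cite{Avron:2011hg}, so the source's ``proof'' is the citation itself. Your reconstruction follows the same route as that reference --- reduce rounding failure to the relative-error event with $\epsilon = 1/(2\,\trace(P))$, observe that for a Gaussian test vector $v^\top P v$ is $\chi^2_{\trace(P)}$ because a projector's eigenvalues are $0$ or $1$, and apply a Chernoff/MGF bound to the resulting $\chi^2_{n_v\trace(P)}$ variable --- so in strategy you are aligned with where the result actually comes from. (Note that this argument is specific to the \emph{Gaussian} estimator; for the Rademacher/Hutchinson vectors with which $\TT_{n_v}$ is literally defined in the paper, $v^\top P v$ is not chi-square and the analysis is different.) The one substantive loose end is the constant: your own expansion of the optimized upper-tail rate function gives $-\tfrac{1}{16m}+\tfrac{1}{48m^2}-\cdots$ with $m=\trace(P)$, and that correction has the \emph{unfavorable} sign --- it makes the exponent less negative --- so the displayed bound $2\exp\left(-n_v/(16m)\right)$ does not follow from the leading term plus a symmetric union bound (the lower tail does satisfy the clean $\exp\left(-n_v/(16m)\right)$ rate, e.g.\ via the Laurent--Massart inequality, but the upper tail as you have set it up is strictly weaker). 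Closing this gap, by balancing the two tails asymmetrically or by checking the constant in \cite{Avron:2011hg} directly, is precisely the step you defer to the reference; since the paper defers the entire proof, this is not a discrepancy with the paper, but it is the one place where your sketch, taken on its own, does not yet yield the stated threshold $n_v \ge 16\,\trace(P)\ln(2/\delta)$.
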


The lemma above suggests that, even for $\delta = 1$, the minimum
number of sample vectors is of the order of magnitude of the trace of
the projector with a large pre-factor. In practice the above lemma
gives a rather loose bound and in many cases the estimator
$\TT_{n_v}$ converges for much lower values of $n_v$ than
indicated by the lemma. In Sec.~\ref{sec:vectrace} we will show
examples for which there exists a value $n_v \lesssim \trace(P)$ after which
the estimator $\TT_{n_v}$ does not experience any relevant
variation. In fact the source of error due to the use of a trace
estimator is quite often negligible relative to  the bias
introduced by approximating the projector $P = h(A)$ with either
$\psi(A)$ or $\chi(A)$.

We can now compute the trace of $P$ as:
\begin{eqnarray}
\label{eq:expan}
\mu_{\ab} \approx \left\{
  \begin{array}{l l}
    \displaystyle {n \over n_v}  \sum_{k=1}^{n_v} v_k^\top \psi(A) v_k & \quad \textrm{Polynomial expansion filtering}\\
    \displaystyle {n \over n_v}  \sum_{k=1}^{n_v} v_k^\top \chi(A) v_k & \quad \textrm{Rational expansion filtering}.
  \end{array}
\right.
\end{eqnarray}
where the sample vectors of the Gaussian estimator are normalized to
one $\|v_k\|=1$. This constraint introduces a factor $n$ in the
estimator but leaves the conclusion of lemma \ref{th:toledo}
unchanged. Moreover it avoids those rare cases when the Gaussian
quotient returns rather large values. We will refer to this modified
definition of $\TT_{n_v}$ as Rayleigh Quotient (RQ) estimator.

The polynomial expansion approach does not require any factorization
of $A$, and this is a big advantage when $A$ is large. Formally, the
rational expansion approach would require a few such factorizations,
one for each pole $z_i$ of the rational function. However, an exact
factorization is no longer needed since we only need to solve with low
accuracy linear systems with matrices of the form $A -z_i I$ and
preconditioned iterative methods can be invoked for this purpose.  In
the next section we illustrate both approximations in detail and show
that the error in computing $\mu_{\ab}$ depends essentially on how
well the projector is approximated by the expansion and may be
strongly affected by the presence of clustered eigenvalues close to
the limits of the interval $\ab$.


\section{Polynomial expansion filtering} 
\label{sec:polynomial}

In the polynomial filtering approach, the step function $h(t)$ in
\eqref{eq:hoft} is expanded into a degree $p$ Chebyshev polynomial
series: 
\eq{eq:polH} h(t) \approx \psi_p(t) = \sum_{j=0}^p \gamma_j
T_j(t) .  
\en 
Here $T_j$ are the $j$-degree Chebyshev polynomials of
the first kind, and the coefficients $\gamma_j$ are the expansion
coefficients of the step function $h$ which are known to be
\eq{eq:gammaj}
\gamma_{j}=
\left\{ \begin{array}{rcl}
{\displaystyle \frac{1}{\pi}\left( \arccos(a)-\arccos(b)\right) } & : & j = 0, \\
\ \\
{\displaystyle 
\frac{2}{\pi}\left( \frac{\sin(j\arccos(a)) - \sin(j\arccos(b))}
{j}\right)} & : & j > 0 . 
\end{array} \right.
 \en
As a result we obtain an expansion of $P$ into matrices $T_j(A)$
\eq{eq:polP}
P \approx \psi_p(A) = \sum_{j=0}^p \gamma_j T_j(A). 
\en
The above derivation is based on the standard assumption that all the
eigenvalues of $A$ lie in the interval $[-1, \ 1]$ but it can be
trivially extended to a generic spectrum with a simple linear
transformation that maps $[\lambda_1, \ \lambda_n ]$ into $[-1, \ 1]$.
This linear transformation is: 
\[
l(t) = \frac{t - (\lambda_n + \lambda_1)/2}{ (\lambda_n - \lambda_1)/2} 
\]
and it 
requires estimates of the largest and smallest eigenvalues $\lambda_n, 
\lambda_1$. For the scheme to work it is necessary that
the estimate for $\lambda_n $  be larger than 
$\lambda_n $ and 
the estimate for $\lambda_1 $  be smaller than 
$\lambda_1 $.

Two  examples of a Chebyshev expansion of $h$ are shown (in red) in
Fig.~\ref{fig:midP}.  As can be observed from the plots (red curves),
the expansion of $h(t)$ has harmful oscillations near the
boundaries. These are known as \emph{Gibbs oscillations}. To alleviate
this behavior it is customary to add damping multipliers -- Jackson
coefficients -- so that \nref{eq:polP} is actually replaced by
\eq{eq:polPJ} 
P \approx \psi_p(A) = \sum_{j=0}^p g_j^{p} \gamma_j
T_j(A) .  
\en 
Notice that the matrix polynomial for the standard
Chebyshev approach has the same expression as above with
the Jackson coefficients $g_j^p$ all set to one, so we will use
the same symbol to denote both expansions.
The Jackson coefficients in their original form can be shown 
to be given by the formula,
\eq{eq:JacCoef} 
g_{j}^{p}=
\frac{ \left( 1 - \frac{j}{p+2} \right) \sin( \alpha_{p} ) \cos( j \alpha_{p} )\
 +
 \frac{1}{p+2} \cos( \alpha_{p} ) \sin( j \alpha_{p} ) }{ 
\sin( \alpha_{p} ) }
\quad \mbox{where}\quad
 \alpha_{p}=\frac{\pi}{p+2}
 , 
\en 
which was developed in \cite{Jay-al}. Note that we can also write these
coefficients in a slightly shorter form as:
\eq{eq:JacCoef1}
g_{j}^{p} = 
\frac{\sin(j+1)\alpha_p }{(p+2)\sin \alpha_p } + 
 \left(1 - \frac{j+1}{p+2} \right) \cos(j \alpha_{p} ) .
\en
\begin{figure}[htb]
\centering
  \begin{subfigure}[b]{0.48\textwidth}
                \centering
                \includegraphics[width=\textwidth,height=0.808\textwidth]{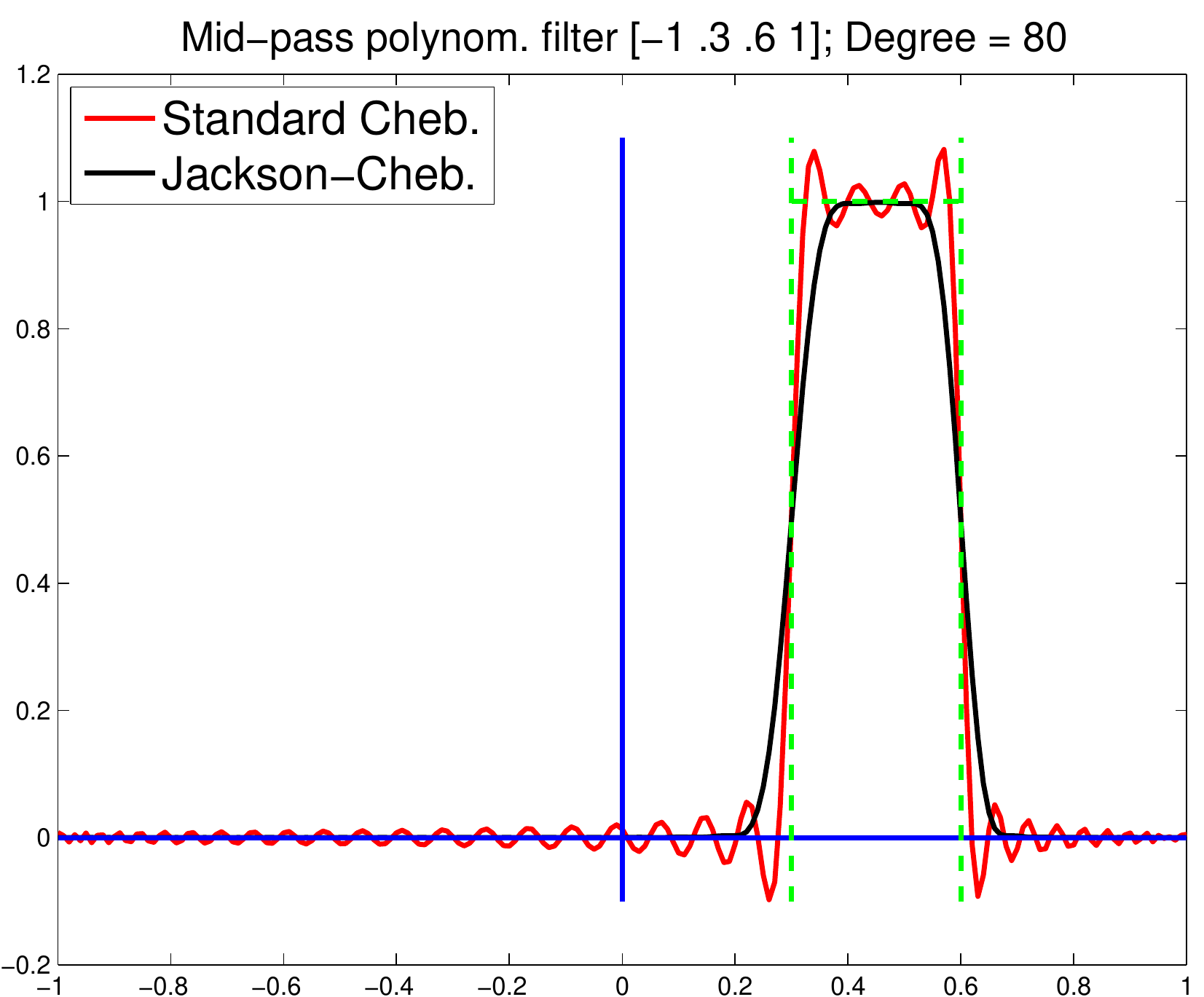}
                \caption{Mid-pass filter of degree $80$.}
                \label{fig:mid-pass}
  \end{subfigure}
  \begin{subfigure}[b]{0.48\textwidth}
                \centering
                \includegraphics[width=\textwidth,height=0.808\textwidth]{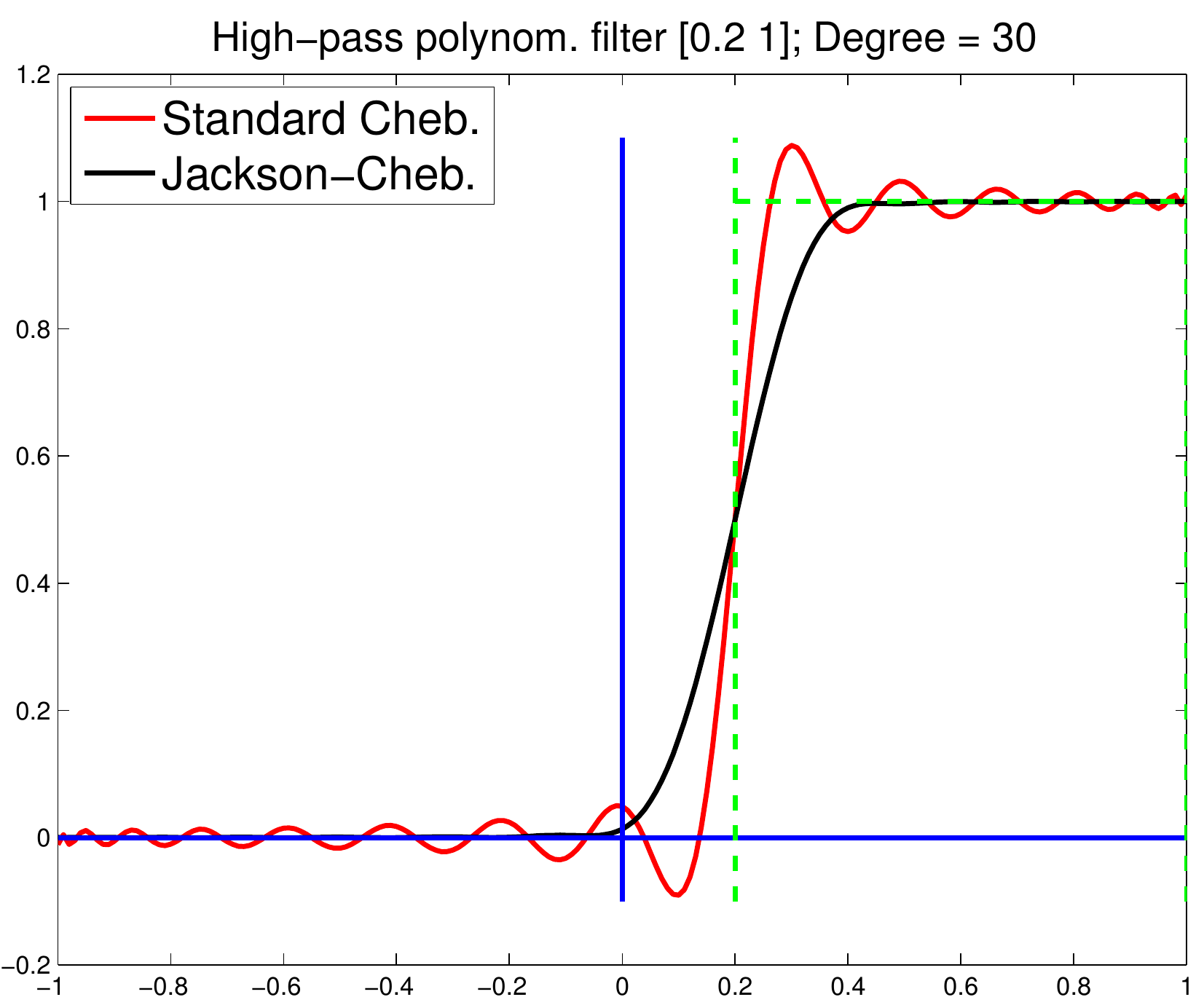}
                \caption{High-pass filter of degree $30$.}
                \label{fig:high-pass}
  \end{subfigure}
\caption{Two examples of polynomial filters.} 
\label{fig:midP}
\end{figure} 

Substituting  the expression for $\psi_p(A)$ directly into the
stochastic estimator~(\ref{eq:expan}), yields the following estimate
\eq{eq:estP}  
\mu_{\ab} = \trace (P) \approx \frac{n}{n_v} \sum_{k=1}^{n_v} 
\left[
\sum_{j=0}^p \gamma_j v_k^T T_j(A) v_k \right]   
 . \en
A clear  advantage of this approach is that it requires only matrix-vector
products. In addition, the vectors $w_j = T_j(A)v$ for a given $v$ can be
easily computed using the 3-term recurrence relation of Chebyshev polynomials
$T_{j+1} (t) = 2\, t\, T_j(t) - T_{j-1} (t)$ which leads to 
\[ 
w_{j+1} = 2\, A\, w_j - w_{j-1}. 
\]
A similar method which uses a more complicated expansion into
orthogonal polynomials, was also advocated in \cite{saad-filt}.

\subsection{Theoretical considerations}\label{sec:Analysis}
It is a known fact that the Chebyshev expansion of a function $h(t)$
(defined in $[-1,\ 1]$) is essentially a Fourier expansion of a
function obtained from $h$ by the change of variables $\cos \theta =
t$ for $\theta \ \in \ [0, \ \pi]$.  The function $h(t)$ becomes
$h(\cos \theta)$ for $0 \le \theta \le \pi$ and this function is
extended into the interval $(\pi, \ 2 \pi]$ by symmetry so the result
is an even function (see \cite[sec.  2.4]{Rivlin} for details). When
the function to expand is discontinuous, as is the case here, the
Fourier expansion does not converge uniformly. In fact the function
will oscillate around the discontinuity, and the maximum of the
polynomial in $(-1, \ 1)$ converges to a number strictly larger than
one \cite{Angot}.  We do however, have pointwise convergence to
$(h(x+0)+h(x-0))/2$ where $h(x+0) $ (resp. $h(x-0))$ represents the
limit of $h(t)$ when $t$ converges to $x$ from the right (resp. left).

The Jackson expansion is a member of several techniques 
used to get rid of Gibbs oscillations. 
It can be viewed as the expansion of a 
sequence of smoothed versions of the original
function. 
Another form of smoothing proposed by Lanczos~\cite[Chap. 4]{Lanczos-book} and
referred to as $\sigma$-smoothing,
uses the   simpler damping coefficients, called 
 $\sigma$ factors by the author:
\[
\sigma_0^p =1; \quad 
\sigma_j^p = \frac{ \sin ( j \theta_p ) }{j \theta_p}, \quad
j=1,\cdots,p \quad \mbox{with} \   
\theta_p =\frac{\pi}{p+1} . 
\] 

The damping factors are small for larger values of $j$ and this has the
effect of reducing the oscillations. The Jackson coefficients have a much
stronger damping effect on these  last terms than the Lanczos $\sigma$ factors.
 For example the 
very last factors, and their approximate values for large $p$'s,  are  in each case:
\[
g_{p}^{p} = 
\frac{2 \sin^2 (\alpha_p) }{p+2} \approx \frac{2 \pi^2}{(p+2)^3}  ;
\qquad
\sigma_p^p = 
\frac{ \sin (p\ \theta_p ) }{p\ \theta_p}\approx
 \frac{1}{p} .
\] 

The above discussion underscores the difficulty in analyzing
convergence and in selecting a proper polynomial degree.  First we do
not have uniform convergence for the case of the standard Chebyshev
approximation and therefore it is difficult to relate the vectors
$\psi_p(A)v$ to $h(A)v$ as would be required to obtain error bounds.
Second, for the Jackson (and Lanczos $\sigma$-) smoothing the function
being approximated is no longer just $h$ but a smoothed intermediate
version $h^{(p)}$ which varies with the degree $p$ selected.  So the
process in this case has two steps: First smooth $h$ into $h^{(p)}$
then approximate $h^{(p)}$.

However, it is important to remember that the standard Chebyshev is
the best approximation of the original function $h$. So any
approximation to $h$ that is used, including the Jackson and
Lanczos-$\sigma$ expansion, will have an error in the weighted $L_2$
norm that cannot be smaller that that of the standard Chebyshev
approach.  In the following we assume that the interval of interest is
$[-1, \ 1]$ and denote by $\| f(t)\|_2$ the $L_2$ norm associated with
the Chebyshev inner product of functions: \eq{eq:ChebP} \left\langle
  \psi_p, \psi_q \right\rangle = \int_{-1}^{1} \frac{\psi_p(s)
  \psi_q(s)}{\sqrt{1 - s^2}}ds \en We can now state the following
result.
\begin{proposition}\label{lem:ineq}
Let $\psi_p\upp{C}$, $\psi_p\upp{J}$ and $\psi_p\upp{L}$  
be the $p$-th degree Chebyshev,  Jackson, or
$\sigma$-Lanczos polynomial approximations to the function $h$
respectively, and let $\gamma_j$ be the sequence defined by 
\eqref{eq:gammaj} 
Then, 
\eq{eq:ineq1}
\| \psi_p\upp{C} -  h(t) \|_2 \le \| \psi_p\upp{J} -  h(t) \|_2, 
\qquad \| \psi_p\upp{C} -  h(t) \|_2 \le \| \psi_p\upp{L} -  h(t) \|_2 . 
\en
In addition, 
\eq{eq:ineq2}
\| \psi_p\upp{C} -  h(t) \|_2^2 = \frac{\pi}{2} \sum_{j=p+1}^\infty \gamma_j^2 \le \frac{4 \pi }{3(p+1)} .  
\en
\end{proposition}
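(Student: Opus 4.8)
The whole argument rests on the orthogonality of the Chebyshev polynomials under the inner product \eqref{eq:ChebP}: one has $\langle T_j, T_k\rangle = 0$ for $j \neq k$, with $\langle T_0,T_0\rangle = \pi$ and $\langle T_j,T_j\rangle = \pi/2$ for $j \ge 1$. The plan is to read both $\psi_p\upp{C}$ and the full expansion of $h$ as orthogonal expansions in this basis. First I would record that the coefficients $\gamma_j$ of \eqref{eq:gammaj} are precisely the Chebyshev--Fourier coefficients of $h$, i.e. $\gamma_j = \langle h, T_j\rangle / \langle T_j, T_j\rangle$ (a one-line check via the substitution $s = \cos\theta$), so that $\psi_p\upp{C} = \sum_{j=0}^p \gamma_j T_j$ is the orthogonal projection of $h$ onto $\mathcal{P}_p := \mathrm{span}\{T_0,\dots,T_p\}$, and the (weighted $L_2$) series $h = \sum_{j=0}^\infty \gamma_j T_j$ converges by completeness of $\{T_j\}$ in that space.

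For the two inequalities \eqref{eq:ineq1} I would invoke the best-approximation property of an orthogonal projection. Both damped approximants lie in $\mathcal{P}_p$, since $\psi_p\upp{J} = \sum_{j=0}^p g_j^p\gamma_j T_j$ and $\psi_p\upp{L} = \sum_{j=0}^p \sigma_j^p\gamma_j T_j$ are linear combinations of $T_0,\dots,T_p$. Because $h - \psi_p\upp{C}$ is orthogonal to every $T_k$ with $k \le p$ (immediate from the formula for $\gamma_k$), the Pythagorean identity $\|h - q\|_2^2 = \|h - \psi_p\upp{C}\|_2^2 + \|\psi_p\upp{C} - q\|_2^2$ holds for every $q \in \mathcal{P}_p$; taking $q = \psi_p\upp{J}$ and $q = \psi_p\upp{L}$ yields exactly \eqref{eq:ineq1}. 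No computation is needed beyond the orthogonality relations.

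For \eqref{eq:ineq2} I would first obtain the identity by Parseval applied to the tail: since $h - \psi_p\upp{C} = \sum_{j=p+1}^\infty \gamma_j T_j$ and every index satisfies $j \ge p+1 \ge 1$, orthogonality gives $\|\psi_p\upp{C} - h\|_2^2 = \sum_{j=p+1}^\infty \gamma_j^2 \langle T_j, T_j\rangle = \tfrac{\pi}{2}\sum_{j=p+1}^\infty \gamma_j^2$. For the numerical bound I would use the explicit form of $\gamma_j$ in \eqref{eq:gammaj}: writing $\sin(j\arccos a) - \sin(j\arccos b)$ as a product and bounding the trigonometric factors by one gives the pointwise estimate $\gamma_j^2 \le 16/(\pi^2 j^2)$. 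Summing the tail with the standard comparison $\sum_{j=p+1}^\infty j^{-2} \le \int_p^\infty x^{-2}\,dx = 1/p$ then produces a bound of the form $C/(p+1)$; matching the advertised constant $4\pi/3$ is a routine estimate (using $\tfrac{1}{p} \le \tfrac{\pi^2}{6(p+1)}$ in the relevant range of $p$).

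The conceptual content lies entirely in the first two paragraphs and is essentially free once orthogonality is in place, so I do not expect any genuine obstacle there. The only step demanding care is the final estimate in \eqref{eq:ineq2}, namely squeezing the crude tail sum into the specific constant $4\pi/3$: the pointwise bound $\gamma_j^2 \le 16/(\pi^2 j^2)$ is lossy, as it discards the cancellation between the two sine terms, so one must check that the chosen tail comparison is tight enough to reach $4\pi/3$ rather than a larger constant. The integral comparison above is comfortably sufficient for this.
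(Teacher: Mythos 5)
Your proposal is correct and follows essentially the same route as the paper: optimality of the orthogonal (least-squares) projection for \eqref{eq:ineq1}, Parseval with $\|T_j\|_2^2=\pi/2$ for the equality in \eqref{eq:ineq2}, and the pointwise bound $\gamma_j^2 \le 16/(\pi^2 j^2)$ for the tail. The only divergence is the final tail estimate: the paper groups the terms of $\sum_{j\ge p+1} j^{-2}$ into blocks of length $p+1$ to obtain $\sum_{j\ge p+1} j^{-2}\le \pi^2/\bigl(6(p+1)\bigr)$ directly for every $p$, whereas your chain $\sum_{j\ge p+1} j^{-2}\le 1/p$ followed by $1/p \le \pi^2/\bigl(6(p+1)\bigr)$ only closes for $p\ge 2$ (it fails at $p=1$ and degenerates at $p=0$) --- harmless for the degrees of practical interest, but worth noting if you want the stated constant for all $p$.
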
 
\begin{proof}
The two 
inequalities in \eqref{eq:ineq1} follow from the optimality of the least-squares
approximation under the inner product \eqref{eq:ChebP}.
The first part of  \eqref{eq:ineq2}, i.e., the equality, follows from 
the fact that the scaled Chebyshev polynomials 
\[ \sqrt{ \frac{1+ \delta_{j0}}{\pi}} T_j(t) \] form a complete
orthonormal sequence of polynomials ($\delta_{ij}$ is the Kronecker
delta) with $T_j$ having squared norm equal to $\pi/2$ for
$j>0$.  For the second part, we have
\begin{eqnarray} 
\| \psi_p\upp{C} -  h(t) \|_2^2 = \frac{\pi}{2} \sum_{j=p+1}^\infty \gamma_j^2 
&=&  
\frac{\pi}{2} \sum_{j=p+1}^\infty 
\left[
\frac{2}{\pi}\left(
 \frac{\sin(j\arccos(a)) - \sin(j\arccos(b))}{j}
\right)\right]^2 \nonumber  \\
&\le &  
\frac{\pi}{2} \sum_{j=p+1}^\infty 
\left[
\frac{2}{\pi} \times 
 \frac{2}{j}
\right]^2 \nonumber  \\
& = &
\frac{8}{ \pi }
\sum_{j=p+1}^\infty 
\frac{1}{j^2}  . \label{eq:prop:1}
\end{eqnarray} 
Consider now the sum $\sum_{j=p+1}^\infty j^{-2}$. To simplify notation
we first start the sum from $p$ instead of $p+1$. Then we have:
\[
\sum_{j=p}^\infty \frac{1}{j^2} 
=
\sum_{l=1}^\infty 
\sum_{i=0}^{p-1} 
\frac{1}{(l p +i)^2} \\
\le
\sum_{l=1}^\infty 
\sum_{i=0}^{p-1} 
\frac{1}{(l p)^2} \\
= \sum_{l=1}^\infty 
p 
\frac{1}{(l p)^2} 
= \frac{1}{p} 
\sum_{l=1}^\infty 
\frac{1}{l^2} .
\] 
As is well-known $\sum_{l=1}^\infty l^{-2} = \pi^2/6$.
In the end we obtain: 
\[
\sum_{j=p+1}^\infty 
\frac{1}{j^2} \le \frac{1}{p+1} 
\sum_{l=1}^\infty 
\frac{1}{l^2} =  \frac{1}{p+1} \frac{\pi^2}{6} . 
\]
Substituting this into \eqref{eq:prop:1} yields 
 the inequality in \eqref{eq:ineq2}.
\end{proof}
 
As indicated by the inequalities \eqref{eq:ineq1}, we cannot do better
than the standard Chebyshev polynomial if we are to measure the
quality of the approximation by the $L_2$ norm based on the inner
product \eqref{eq:ChebP}. The convergence based on the bound
\eqref{eq:ineq2} is like $1/\sqrt{p}$, which is slow. However, much of
the inaccuracy occurs around the jumps of the function.  From a
practical point of view, the polynomial is available explicitly and it
is possible to analyze the error $ | \psi_p(t) - h(t) | $ in specific
subintervals to determine whether $\psi_p$ will be satisfactory. The
cost of this analysis is trivial since the degree is usually moderate.

\subsection{Generalized eigenvalue problem}
We now consider the generalized eigenvalue problem $Ax=\lambda Bx$ where
$A$ and $B$ are symmetric and $B$ is positive definite. In this case
the projector $P$ in \nref{eq:proj} becomes
\eq{eq:projG}
P = \sum_{\lambda_i \ \in \ [a  \ b]} u_i u_i^T B , 
\en 
and the eigenvalue count is again equal to its trace\footnote{Details on this can be found for example in \cite{Kramer-al-FEAST-2013}.}.
However, there are now two matrices involved and
 this projector does not admit an expression similar to that in
\eqref{eq:hoft} for  the standard case.
A common remedy to this issue is to compute the Cholesky factorization
$B=LL^T$ of $B$ and transform the generalized eigenproblem into a
standard one with the matrix $L^{-1} AL^{-T}$. This solution
reintroduces the need for a costly factorization which we wanted to
avoid in the first place. The following simple theorem yields the
basis for an efficient alternative:
\begin{proposition} 
  Let $B$ be a semi-positive definite matrix and $B = L L^T$ its
  Cholesky factorization.  Then the inertias of $A - \sigma B$ and $L
  \inv A L^{-T}- \sigma I$ are identical.
\end{proposition}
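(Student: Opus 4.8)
The plan is to show that $A - \sigma B$ and $L^{-1} A L^{-T} - \sigma I$ are \emph{congruent}, and then to invoke Sylvester's law of inertia (already the workhorse of the Sturm-count approach discussed in the introduction), which guarantees that congruent symmetric matrices share the same triple of positive, negative, and zero eigenvalue counts. Throughout I take $B$ to be positive definite, so that its Cholesky factor $L$ is nonsingular and $L^{-1}, L^{-T}$ are well defined.

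First I would apply the congruence transformation $M \mapsto S^T M S$ to the matrix $M := A - \sigma B$, using the nonsingular matrix $S := L^{-T}$. Since $S^T = L^{-1}$, this produces
\[
S^T M S = L^{-1}(A - \sigma B) L^{-T} = L^{-1} A L^{-T} - \sigma\, L^{-1} B L^{-T}.
\]
The only algebraic manipulation needed is to simplify the last term. Substituting the Cholesky factorization $B = L L^T$ gives
\[
L^{-1} B L^{-T} = L^{-1}(L L^T) L^{-T} = (L^{-1}L)(L^T L^{-T}) = I,
\]
so that $L^{-1}(A - \sigma B) L^{-T} = L^{-1} A L^{-T} - \sigma I$. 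This exhibits the two matrices in the statement as related precisely by congruence through the nonsingular matrix $S = L^{-T}$.

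To finish, I would observe that both matrices are symmetric ($A$ is symmetric and $\sigma$ is real), and then apply Sylvester's law of inertia to the identity above: congruence by a nonsingular matrix leaves the inertia unchanged, so $A - \sigma B$ and $L^{-1} A L^{-T} - \sigma I$ have identical inertias, as claimed.

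There is essentially no hard step here; the result falls out immediately from Sylvester's theorem once the identity $L^{-1} B L^{-T} = I$ is noted. The only point demanding care is the hypothesis: the statement as phrased presupposes that $B$ possesses a nonsingular Cholesky factor, i.e.\ that $B$ is positive \emph{definite} (consistent with the assumption made at the start of this subsection). Were $B$ merely positive semidefinite, $L$ would be singular, $L^{-1}$ would not exist, and the transformed matrix would be undefined, so the claim must be read under positive definiteness of $B$.
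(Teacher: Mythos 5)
Your proof is correct and follows essentially the same route as the paper's: both establish that $L^{-1}(A-\sigma B)L^{-T} = L^{-1}AL^{-T}-\sigma I$ and then invoke the invariance of inertia under congruence by a nonsingular matrix ($X C X^T$ with $X = L^{-1}$ in the paper's notation). Your added remark that the hypothesis must really be positive \emph{definiteness} of $B$ (so that $L^{-1}$ exists) is a fair and accurate observation, consistent with the assumption stated at the start of that subsection.
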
 
\begin{proof}
It is well-known that the inertias of a matrix $C$ and 
$X C X^T $ are the same for any nonsingular matrix $X$, see,\
e.g., \cite{GVL-book}. The proposition follows by applying
 this result with $C = A - \sigma B$ and $X = L\inv$.
\end{proof}

A consequence of the above statement is that we can estimate
the number of eigenvalues of the pair $(A, B)$ located in a given
interval \emph{without resorting to any factorization}. 
In essence the idea is to convert the eigenvalue count for the pair 
$(A,B)$ into two eigenvalue counts for two standard eigenvalue
problems. Specifically, we have $\mu_{\ab} = \mu_a -\mu_b$ where
 $\mu_a$ is the number of positive 
eigenvalues of $A - a B$ and $\mu_b$ is
the number of positive eigenvalues of $A - b B $.

Thus, this approach requires a  Chebyshev expansion of the high-pass filters
\begin{eqnarray*}
f_\sigma(t) = \left\{
  \begin{array}{l l}
    1  & \quad \textrm{if}\ \ t \ \geq \ \sigma \\
    0  & \quad \textrm{otherwise}\\ 
  \end{array}
\right. 
\end{eqnarray*}
for $\sigma = a $ and $\sigma = b$ (see Fig.~\ref{fig:high-pass}, right,
 for an example). From these expansions, we would
get estimates for the desired counts $\mu_\sigma$ for $\sigma = a, b$, i.e., 
\[
f_\sigma(t)  \approx \sum_{j=0}^p 
\eta_j^{\sigma}  T_j(t) 
\qquad \rightarrow \qquad 
\mu_\sigma  \approx \sum_{j=0}^p 
\eta_j^{\sigma}  \trace [ T_j(A - \sigma B) ] .
\] 
Using the same set of sample vectors to estimate the two traces, we would then
get the following  eigenvalue count in the interval $\ab$:
\[
\mu_{\ab} = \mu_a - \mu_b 
 \approx \frac{n}{n_v} \sum_{k=1}^{n_v} 
\left[ \sum_{j=0}^p \eta_j^{a}  v_k^T T_j(A - a B) v_k  -
\sum_{j=0}^p \eta_j^{b} v_k^T T_j(A - b B) v_k \right]  . 
\]
As will be noted in the section devoted to the numerical experiments,
for truly generalized problems $(B \ne I)$, the spectrum 
distribution of the matrices $A-\sigma B$ for $\sigma = a, b$, may
lead to difficulties, requiring a very large degree polynomial in some
cases. It is also possible to count eigenvalues to the left of $a$ and
$b$ by using a low-pass filter. Notice that high-pass/low-pass filters
require usually a lower degree than mid-pass (`barrier') filters,  so
this can also be used for the standard eigenvalue problem not just the
generalized problem. As for the standard eigenvalue case, costly
factorizations are avoided at the expense of using two filters with
standard matrices.


\section{Rational expansion filtering} 
\label{sec:rational}
A natural extension to the idea of polynomial filtering
is to expand $P$ as a rational function. One of several ways
of achieving this expansion is via the Cauchy integral definition
of a projector: 
\eq{eq:cauchy}
P = - \frac{1}{2 i \pi } \int_{\Gamma} R(z) dz, 
\en 
 where $R(z) = (A-zI)\inv$ is the resolvent of $A$, $z \in
 \mathbb{C}$, and $\Gamma$ is some smooth curve in the complex plane
 containing the desired part of the spectrum (see, e.g.,
 \cite{Saad-book3}). Typically $\Gamma$ is taken to be a circle
 whose diameter is the line segment $\ab$.  The above integral is
 then approximated by resorting to numerical integration methods,
 leading to
\[
P \approx \chi_{n_c}(A) = \sum_{j=1}^{n_c}  \omega_j (A - z_j I)\inv ,  
\]
where the $z_j$'s are integration points and the $\omega_j$s are quadrature
weights. 
It is now possible to use the trace estimator by sampling with 
a set of random vectors. As shown in~(\ref{eq:expan}), the trace of $P$ will be 
approximated  by the average of $(Pv,v)$ over many sample
vectors $v$, multiplied by $n$
\eq{eq:estR}
\mu_{\ab} = \trace (P) \approx \frac{n}{n_v} \sum_{k=1}^{n_v} 
\left[
\sum_{j=1}^{n_c} \omega_j v_k^T (A - z_j I)\inv v_k \right] . 
\en
This method is not new as 
a similar idea was advocated in \cite{FutamuraSakuraiEigCnt}.

\subsection{Approximations for the spectral projector} 
\label{sec:ratGen} 
The rational expansion can be easily adapted to the case of a
generalized eigenvalue problem $Ax = \lambda B x $.  In this case the
desired projector is still given by Eq.~\nref{eq:cauchy}, but now
the resolvent becomes
\[
 R(z) = (A-z B)\inv B
\] 
(see~\cite{Kramer-al-FEAST-2013} for a simple derivation). This means
that the only change from the standard case is that the resolvent
$(A-z_j I)\inv$ in~(\ref{eq:estR}) must be replaced by $(A-z_j B)\inv
B$. In either case the stochastic estimation requires solving linear
systems with multiple right hand sides for each integration point. In
such situations it is customary to factorize the matrix $A-z_j B$
upfront so that the factors can be repeatedly used at a later
stage. In general such factors do not need to be calculated
exactly. For example, when employing an iterative procedure, an
approximate factorization of the matrix $A - z_j B$ can be used as a
preconditioner for the linear solver.

From the computational cost point of view, this approach may appear to be
expensive and not competitive with the one based on Sylvester's
inertia theorem described in Sec.~\ref{sec:eigcount}. Indeed, the
inertia approach requires only two factorizations whereas we may now
need a few such factorizations to get a good approximation to the
spectral projector. In reality the method based on Sylvester's
inertia must utilize an exact factorization, whereas in the above
formula, all that is needed is to solve linear systems $(A-z_j I) y_k
= v_k$ for many right-hand sides $v_k$ by any inexpensive procedure,
including an iterative one. 
 
The rational expansion approach should be favored when used in
combination with the FEAST eigensolver
\cite{FEAST,FEASTsolver,FEASTdoc} (or similar methods). Such an
expansion allows us to get a rough eigenvalue count when the
factorizations of $A - z_i B$ have been already computed in
preparation for a subspace iteration-like procedure used by FEAST for
the symmetric problem. In such procedure Gauss-quadrature points
(i.e. shifts $z_i$) are positioned along the half-circle contour as
shown on the left of Fig.~\ref{fig:gauss}. The same quadrature
points can be used to compute the rational approximation for the
projector $\chi_{n_c}$ which is plotted, as a function of $\lambda$,
on the right side of Fig.~\ref{fig:gauss}.
\begin{figure}[hbt]
\centering
  \begin{subfigure}[b]{0.495\textwidth}
                \centering
                \includegraphics[width=\textwidth]{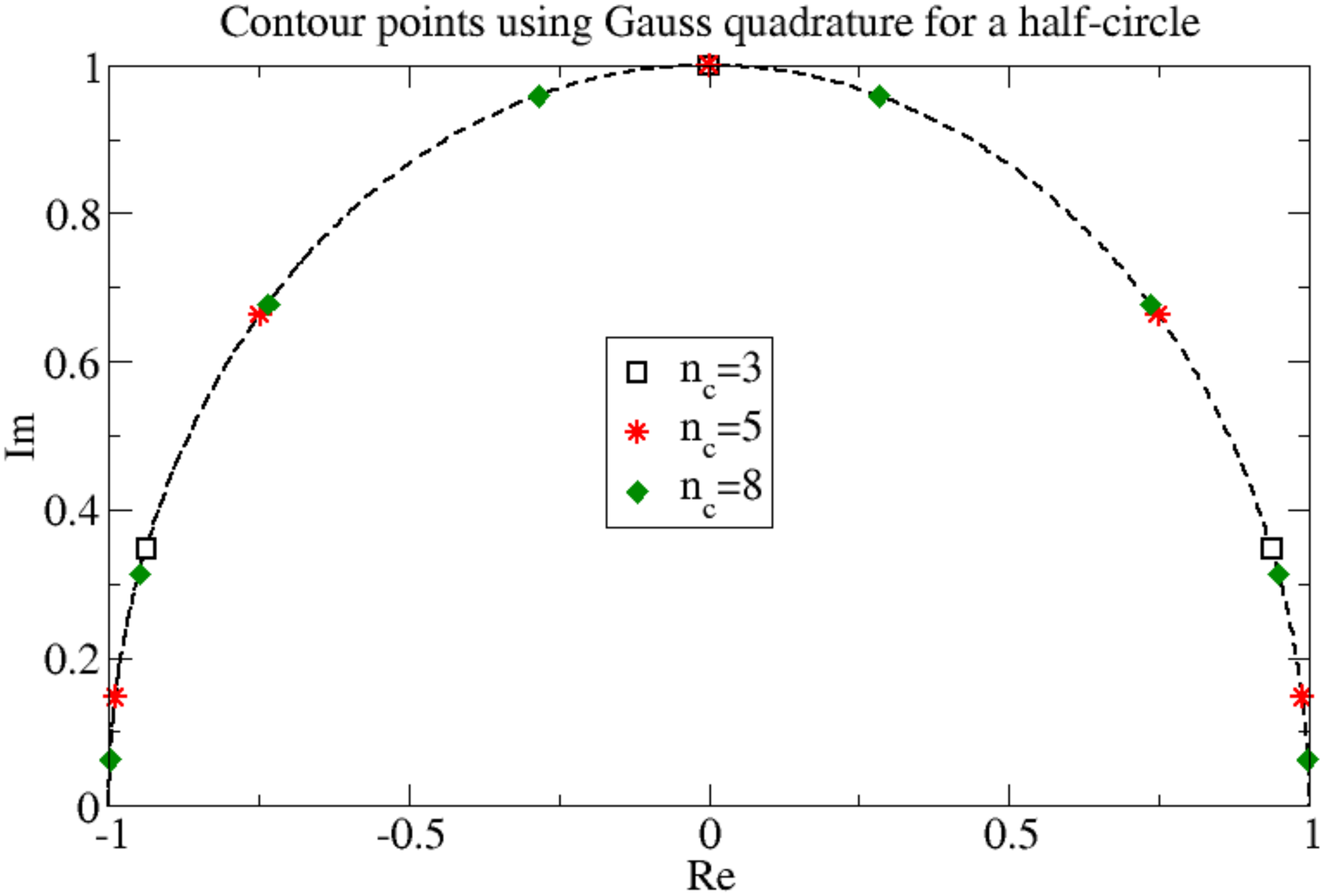}
                \caption{Quadrature points.}
                \label{fig:quadnod}
  \end{subfigure}
  \begin{subfigure}[b]{0.495\textwidth}
                \centering
                \includegraphics[width=\textwidth]{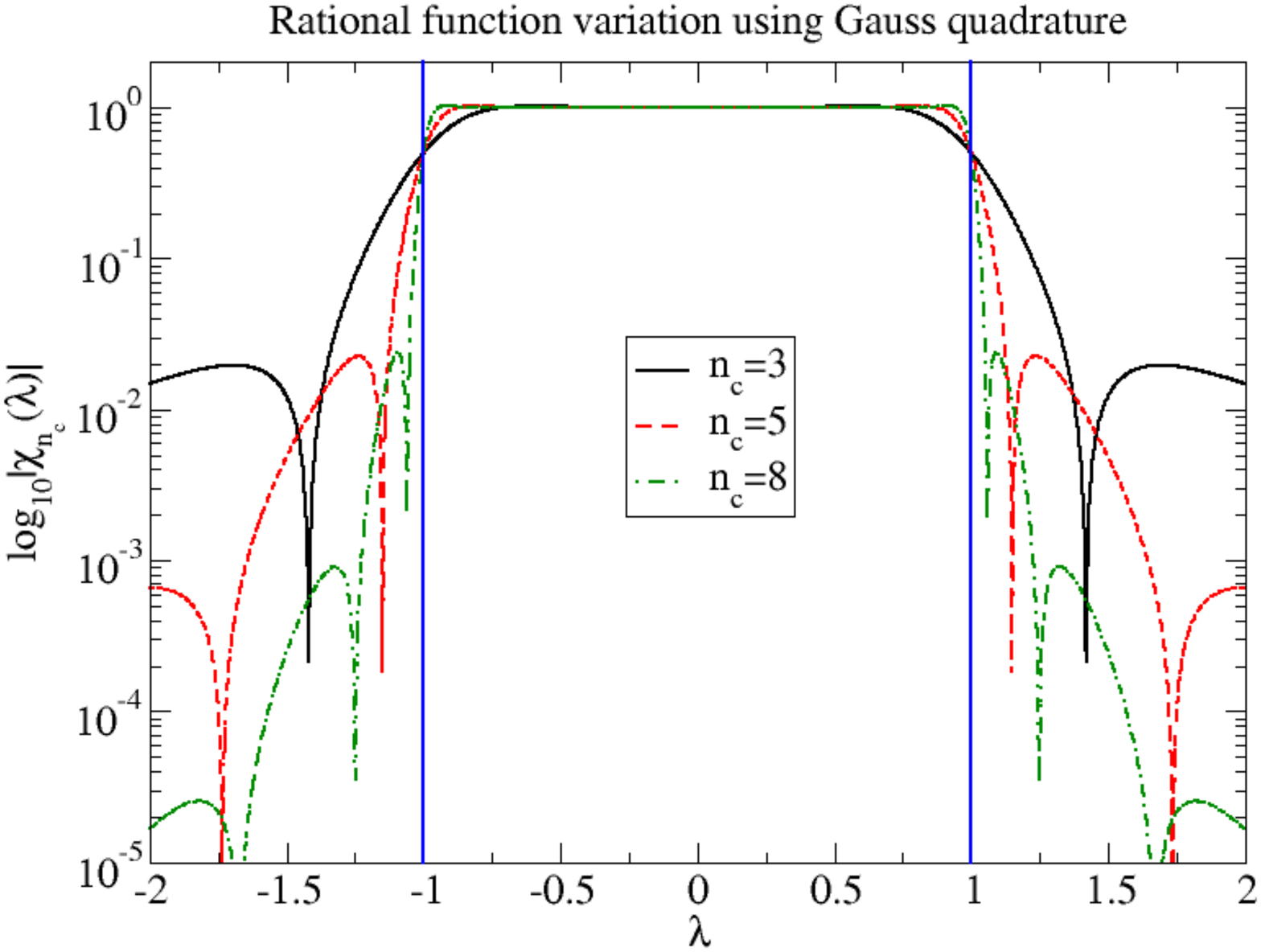}
                \caption{Rational approximation filter.}
                \label{fig:rafil}
  \end{subfigure}
  \caption{Position in the complex plane of the $n_c$ Gauss quadrature
    points using an integration along the half-circle, and the
    corresponding values of the rational function
    $\chi_{n_c}(\lambda)$ using a semi-log plot. The search interval
    is here set to $[-1, \ 1]$.}
\label{fig:gauss}
\end{figure} 
Note the rapid decay of the rational function from $\simeq 1$ in the
middle of the interval $[-1, \ 1]$ to $\simeq 0$ outside, which both
explain the expected efficiency of the trace estimator
(\ref{eq:estR}), and can lead to some remarkable convergence rates for
the FEAST subspace iteration procedure.  More precisely, for the
eigenpair $(\lambda_j, u_j)$, the error introduced by the numerical
integration on the projector is bounded by (Theorem 4.1 in
\cite{Tang13}):
\begin{equation}
\|(h - \chi_{n_c})u_j\|_B \leq \alpha\left|\frac{\chi_{n_c}(\lambda_{M_0+1})}{\chi_{n_c}(\lambda_{j})}\right| \quad \quad \forall j\in 
1,\dots,\mu_{\ab} 
\label{eq:feast_ratio}
\end{equation}
where the first $\mu_{\ab}$ eigenvalues $\lambda$ are the ones
inside the search interval $[-1, \ 1]$, and $M_0$ is the size of the
search space. The ratio between the rational functions in
(\ref{eq:feast_ratio}) is inversely proportional to the FEAST
convergence rate, and it is expected to decrease exponentially with
the number of integration points $n_c$. In turn, the value of this ratio
may increase significantly in case one or both of the search interval
boundaries is (are) near clusters of the spectrum (i.e. if $M_0$ is
too close to $\mu_{\ab}$ for FEAST). Similarly, the eigenvalue
count may not be very accurate in this case (see
Sec.~\ref{sec:biasclst} for more details).

\subsection{Practical considerations}
\label{sec:cons} 
Next we consider a few implementation issues related to the rational
approximation filtering approach.  The eigenvalue count estimator is
based on the formula \eq{eq:est} \trace (P ) \approx \frac{n}{n_v}
\sum_{k=1}^{n_v} \sum_{j=1}^{n_c} \omega_j v_k^T (A - \sigma_j I)\inv
v_k .  \en The above formula involves two loops: the $k$-loop which we
will refer to as the `sample vector loop' and a $j$ loop which we call
the `integration loop' with $n_c$ the number of integration points.
As it is written, the above formula suggests that we would run a
vector loop, in which we would generate random vectors, then for each
vector in turn we would solve $j$ right-hand sides (integration loop).
This is fine when a direct solver is used for the solutions, provided
we store the factorizations for each integration point.
 
An important observation here is that we can also swap the two loops,
in effect exploiting the fact that the trace of the sum of operators
is the sum of the different traces:
\begin{eqnarray*} 
  \trace (P ) &\approx& \trace \sum_{j=1}^{n_c} \omega_j (A - \sigma_j I)\inv \\
  &=& \sum_{j=1}^{n_c}  \omega_j \trace (A - \sigma_j I)\inv \\
  &\approx& \frac{n}{n_v} 
  \sum_{j=1}^{n_c} \omega_j \sum_{k=1}^{n_v} v_k^T (A - \sigma_j I)\inv v_k
  . \end{eqnarray*}
This can be quite useful in a processing phase: As each of the 
$n_c$ factorizations is obtained we generate a number of random vectors
and estimate the trace of $(A-\sigma_j I)\inv$ with them. At the end of all
the factorizations, we end up with an eigenvalue count estimate 
which can be exploited to determine the subspace dimension to use
in the FEAST subspace iteration procedure (i.e. value of $M_0$ in (\ref{eq:feast_ratio})).

Iterative solvers offer an appealing alternative to exact factorizations.
When using a Krylov subspace method without preconditioning,
one can immediately make the well-known observation that the
various systems $(A-\sigma_j I) y_j  = v$ for different $j$'s 
and for each random  vector $v$, can all be solved with the same Krylov
subspace (e.g.~\cite{LopezSimoncini06}).  

Another issue is to determine what accuracy to require from the
solver. In the context of the FEAST eigensolver  \cite{FEASTsolver}, 
the residual norm
criterion for GMRES will clearly yield a similar size error for the
eigenvector as it was observed and analyzed in \cite{Kramer-al-FEAST-2013,Tang13}. The
problem for counting eigenvalues is slightly different. A first
observation is that we are not interested in computing eigenvalues or
eigenvectors.  It is well-known that the error made on the eigenvalues is
typically of the order of the square of the related residual
norm~\cite{Saad-book3}. From our observations, a high accuracy is not
needed. However, there is a minimum accuracy required, below which the
method will no longer work. It is also important to have consistent
error thresholds. For example, just using a fixed number of GMRES
steps will usually not work. It is best to use a criterion based on a
residual norm reduction, for example by a factor of $10^{-2}$.

\section{Numerical experiments}
\label{sec:numtests}
This section provides numerical illustrations of a 
number of features and discusses additional issues of the
methods proposed in Sec.~\ref{sec:polynomial} and
\ref{sec:rational}.

\subsection{Polynomial filtering: Standard and Jackson polynomials}
\label{sec:jackpoly}

\begin{figure}[hbt]
\begin{center} 
\includegraphics[width=0.495\textwidth,height=0.4\textwidth]{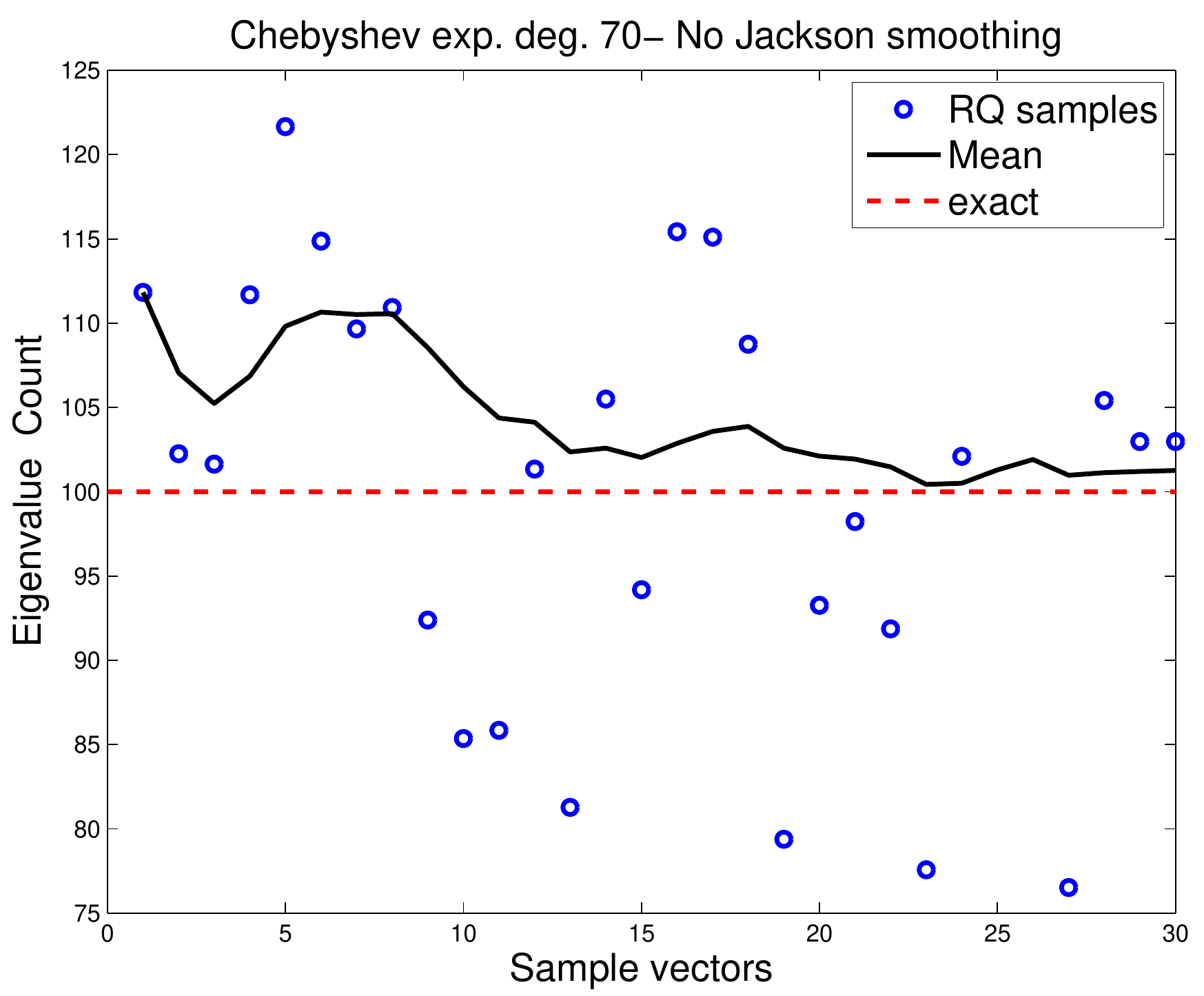} 
\includegraphics[width=0.495\textwidth,height=0.4\textwidth]{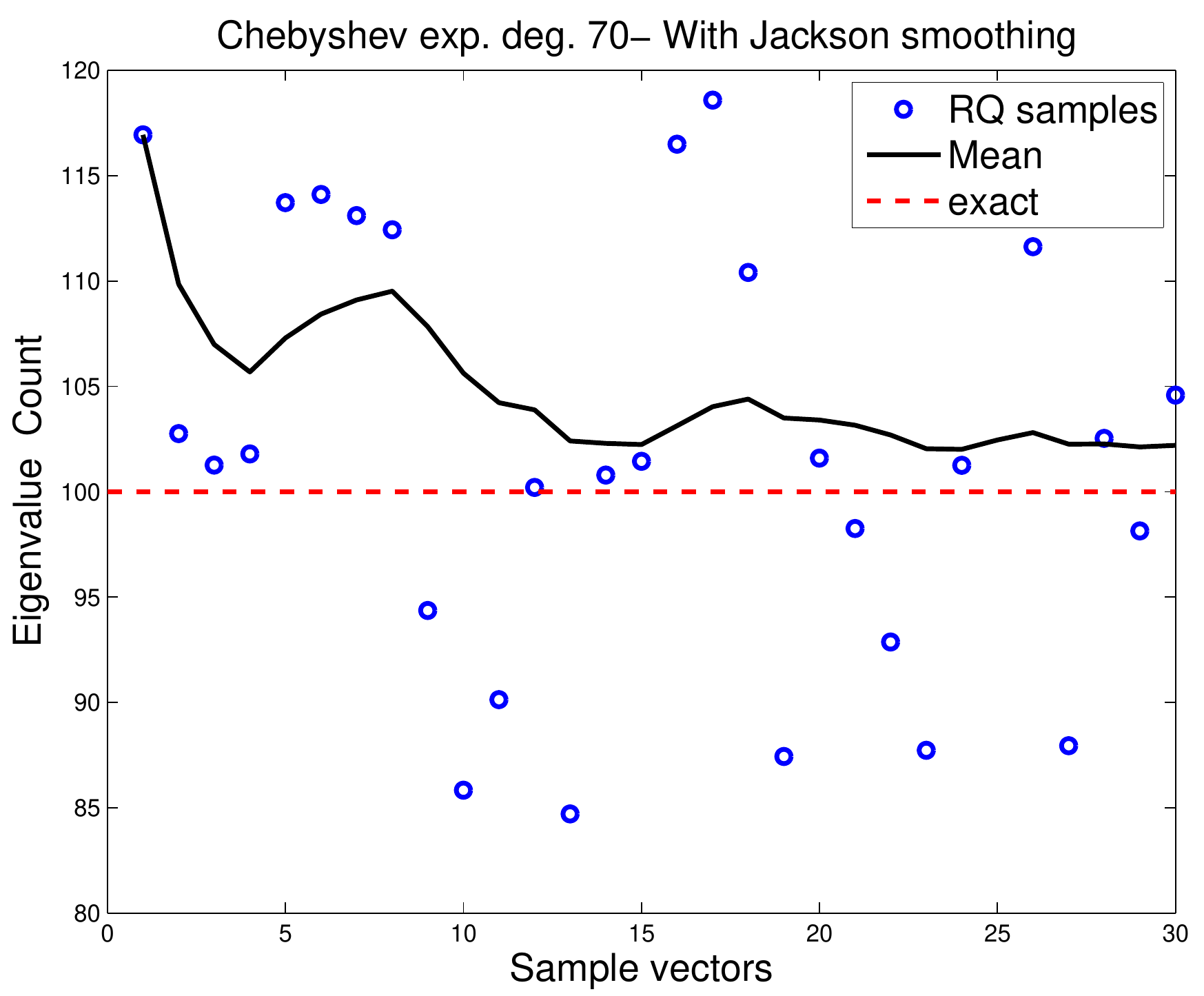} 
\end{center}
\caption{Chebyshev vs Jackson-Chebyshev for counting eigenvalues
$\lambda_{101} $ to $\lambda_{200} $  for the matrix Na5.
\label{fig:chebNa5}}
\end{figure}

We use as first example the Na5 matrix generated by the PARSEC
code. This matrix, available from the University of Florida matrix
collection
\footnote{\url{http://www.cise.ufl.edu/research/sparse/matrices/}}, is
of size $n = 5832$ and has $ nnz = 305630$ nonzero entries.  We
computed eigenvalues at the outset and defined the interval $\ab$ so
that $a$ is in the middle of $\lambda_{100}, \lambda_{101}, $ and $b$
is in the middle of $\lambda_{200}, \lambda_{201}$. In this situation
the exact eigenvalue count is 100. Using $n_v = 30$ and a degree 70
standard Chebyshev polynomial yields the results shown on the left
plot of Fig.~\ref{fig:chebNa5}. The right plot of the same figure
shows the result obtained with Jackson-Chebyshev polynomials of the
same degree: the last value of the computed average (eigenvalue count
estimate) was 101.25 for Chebyshev and 102.20 for
Jackson-Chebyshev. The same sequence of random vectors were used in
both Chebyshev and Jackson-Chebyshev.

From this specific example, one may conclude that Jackson tends to
often give an overestimate whereas standard Chebyshev often gives an
underestimate. In reality, the behavior of both methods depends
crucially on the eigenvalue density distribution in relation to the
position of the extrema of the interval $\ab$ and the situation is
often reversed (see next subsection).

\subsection{Estimate bias}\label{sec:bias}  
In this section we consider only polynomial methods, although similar
statements can be made for the rational approximation methods.  Since
$\psi_p(A)$ is only an approximation to the projector \nref{eq:proj},
its trace will not be equal to the number of eigenvalues inside the
interval. In some situations (not involving clustering) a relatively
large degree is needed to get a reasonable approximation. An
illustration of what can happen when the degree is not large enough is
shown in Fig.~\ref{fig:Si2Bias}.  On the left side the lower
horizontal line is the trace of the matrix $\psi_p(A)$ computed with a
low degree. The higher dashed horizontal line is the actual eigenvalue
count. There is a substantial gap between the two showing that the trace
estimator does work, but it estimates a trace of an inaccurate
projector. The right side of the figure shows that the gap narrows
substantially for a higher degree polynomials. As shown in
Table~\ref{tab:bias}, a degree above 70 is necessary to get an
approximation that is close enough, where the lower and upper dashed
lines are close. In this regard there could be a big difference
between the Jackson-Chebyshev and the standard Chebyshev
polynomials. Here Jackson smoothing seems to be very detrimental to
the estimation. In other situations, the Jackson polynomial performs
better.

\begin{figure}[hbt]
\begin{center} 
\includegraphics[width=0.495\textwidth,height=0.4\textwidth]{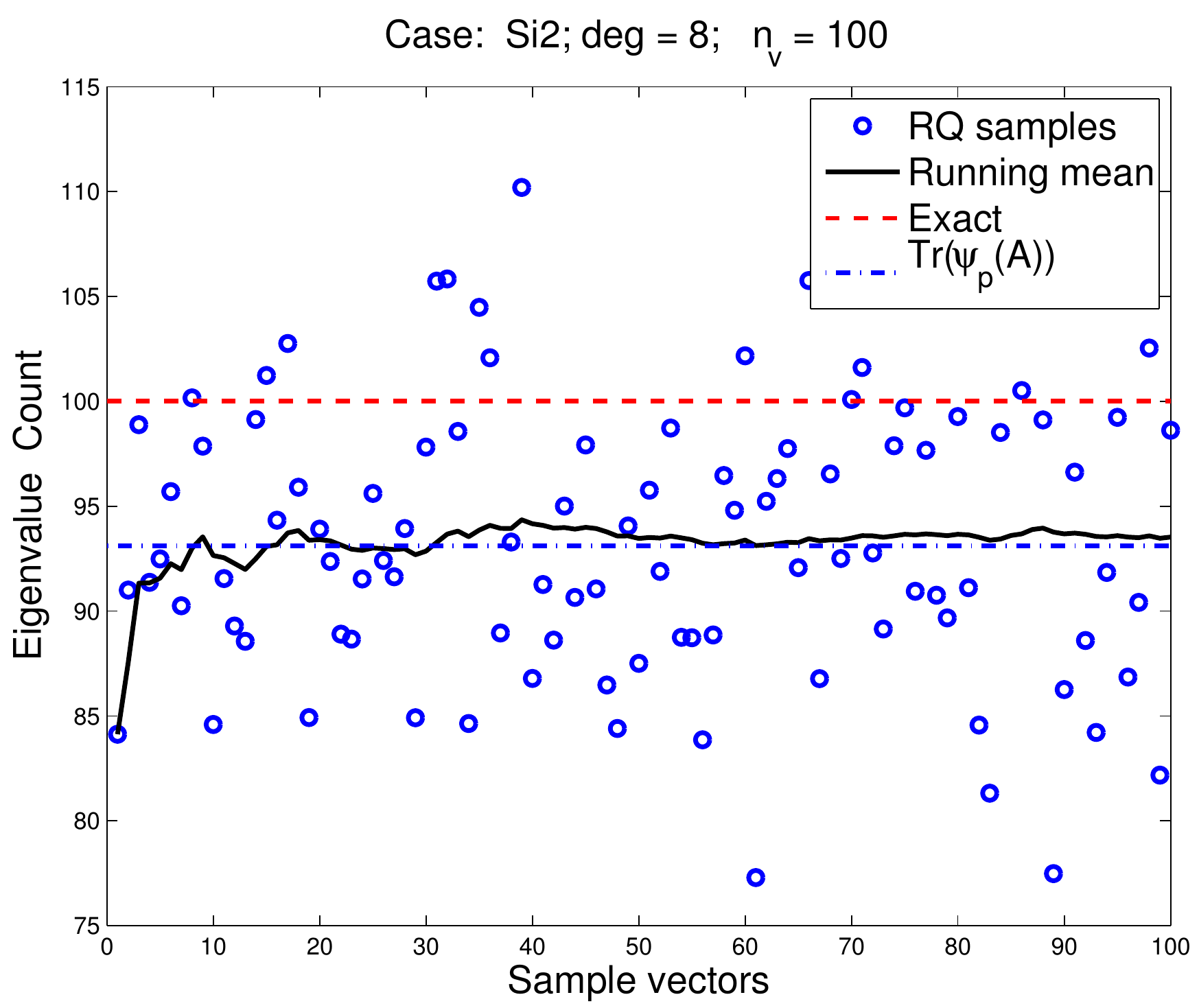} 
\includegraphics[width=0.495\textwidth,height=0.4\textwidth]{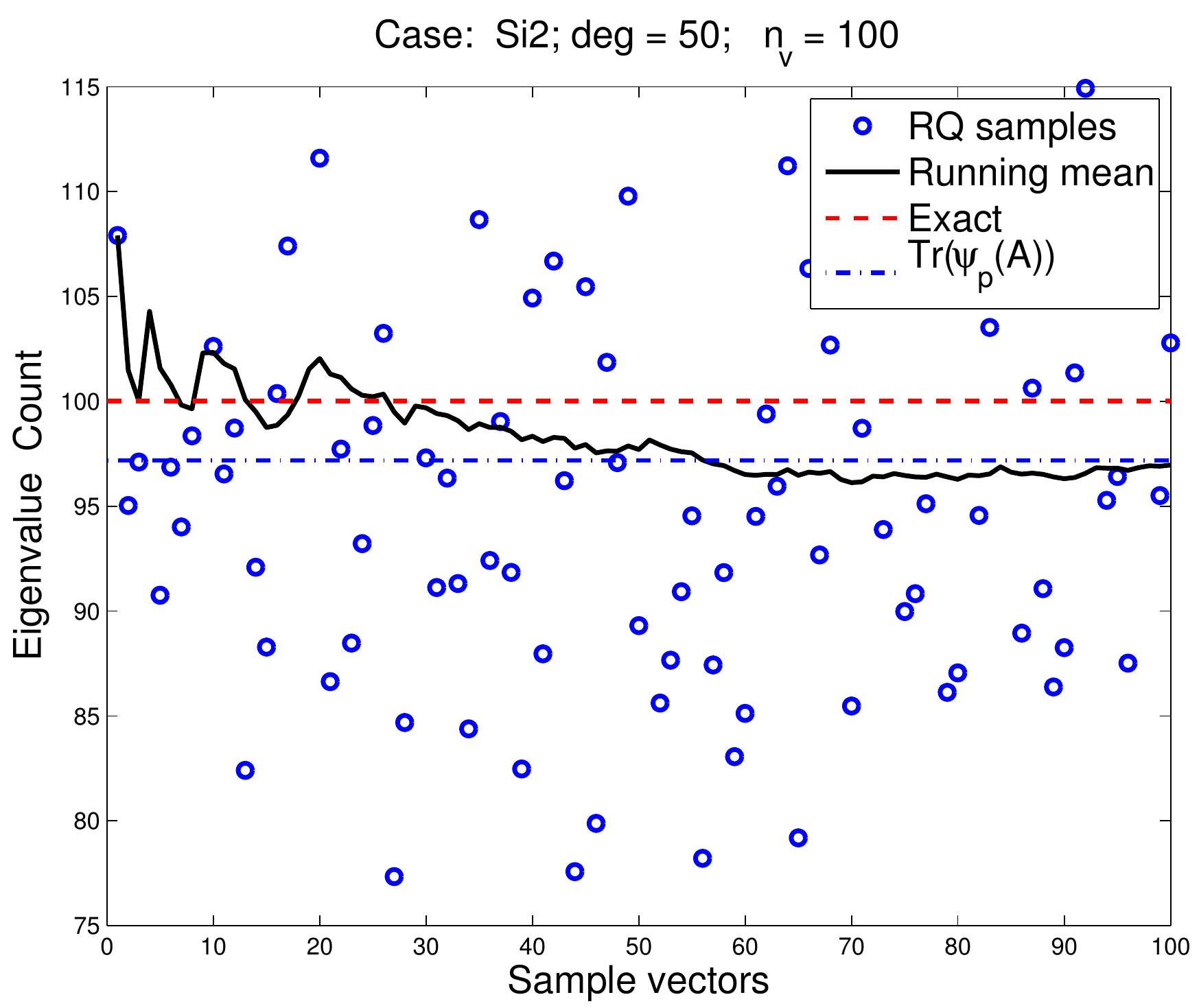} 
\end{center} 
\caption{Chebyshev based counting with a degree 8 (left) and
50 (right) for matrix Si2.
\label{fig:Si2Bias}}
\end{figure}
  
We explored a few ways to fix this bias. In particular the simplest
correction is to compare the integrals of $\psi_p(t)$ in $[-1, \ 1]$
with that of the step function on the same interval. The integral of
the step function is just $b-a$.  The integral of each $T_k$ in the
interval $[-1, \ 1]$ is readily computable (it is equal to zero when
$k$ is odd and to $-2/(k^2 -1)$ when $k$ is even).  Then one can
obtain the integral of $\psi_p$ in $\ab$ from which a corrective
factor can be obtained. However, experiments with such a correction
were mixed. The difficulty inherent to this problem is that the bias will
certainly depend on the distribution of eigenvalues.

\begin{table}[htb]
\begin{center}
\begin{tabular}{|l|r|r|r|r|r|r|r|r|} \hline 
  $p$  &  8 &  20 &  30 &  40 &  50 &  70 & 100 & 120 \\ \hline
 tr $ [ \psi_p^{(C)} (A) ] $ &
  93.12 &  97.29 &  96.98 &  96.81 &  97.19 & 101.58 & 101.54 & 100.76\\  \hline
tr  $ [ \psi_p^{(J)}(A)] $ & 74.53 & 89.59 & 93.00 &
 94.51 & 95.29 & 95.97 & 96.99 & 97.74 \\ \hline 
\end{tabular}
\end{center} 
\caption{Evolution of the trace of $\psi_p(A)$
  for the  standard Chebyshev $\psi_p^{(C)}$ and
  Jackson-Chebyshev $\psi_p^{(J)}$
  approaches for the Si2 test case. The exact count is 100.
  \label{tab:bias}}
\end{table}

\subsection{Eigenvalue clustering}
\label{sec:biasclst}
As was mentioned at the end of Sec.~\ref{sec:jackpoly}, the
accuracy of the eigenvalue estimate depends on the eigenvalue
distribution of $A$. In particular, if either $a$ or $b$ is close or
inside a cluster of eigenvalues, $\mu_{\ab}$ may overestimate or
underestimate the true number of eigenvalues in $\ab$. Moreover the
extent of the error is related to both the size of the cluster and the
relative distance $D_\lambda=\frac{\|\lambda_{i+1} -
  \lambda_i\|}{\|\lambda_i\|}$ between eigenvalues in the cluster.

In order to verify this point we ran a series of
tests on an eigenproblem containing artificially engineered
clusters of distinct length and density.  For each cluster we placed
the rightmost ending of the interval $\ab$ either at the beginning
or at the end of the set of values forming the cluster. For each case
we computed $\mu_{\ab}$ using both the simple Chebyshev
$\psi^{(C)}_p(A)$ and the Jackson-Chebyshev expansion $\psi_p^{(J)}(A)$ with
two quite different polynomial degrees $p$.

Results from our tests clearly show that when the filtering interval
intersects the beginning of a cluster both $\psi^{(C)}_p(A)$ and
$\psi_p^{(J)}(A)$ overestimate the eigenvalue count. In contrast when the
interval intersects the end of a cluster both expansions underestimate
the eigenvalue count. This result does not in general depend on the
polynomial degree adopted and reflects the tailing effects of the
expansions as shown for example in Fig.~\ref{fig:mid-pass}. Moreover
we observed that the extent of the error made depends on the length of
the cluster and on the relative distance between eigenvalues in the
cluster.

We repeated the set of tests for the rational expansion and observed a
behavior similar to polynomial expansions. When most of the values
characterizing a cluster are close but outside the interval $\ab$,
the trace of $\chi_{n_c}(A)$ overestimates the number of
eigenvalues. In contrast, if such cluster of values is almost entirely
within the filtering interval, the eigenvalue count is underestimated.
In other words, clusters located near the end-points will just
exacerbate the bias described in section~\ref{sec:bias}.


\subsection{Number of sample vectors and estimator accuracy}
\label{sec:vectrace}
According to lemma \ref{th:toledo} there exist a minimum value
of vectors for which we can assert we converged to the value of the
trace with probability $1-\delta$. While this bound is rather loose it
indicates that convergence is reached for $n_v \sim
\mathcal{O}({\trace(P)})$ for any meaningful $\delta$. In this section we
illustrate the numerical behavior of the estimator with two typical
examples which we briefly comment.
\begin{figure}[!hbt]
\centering
  \begin{subfigure}[b]{0.495\textwidth}
                \centering
                \includegraphics[width=\textwidth,height=0.808\textwidth]{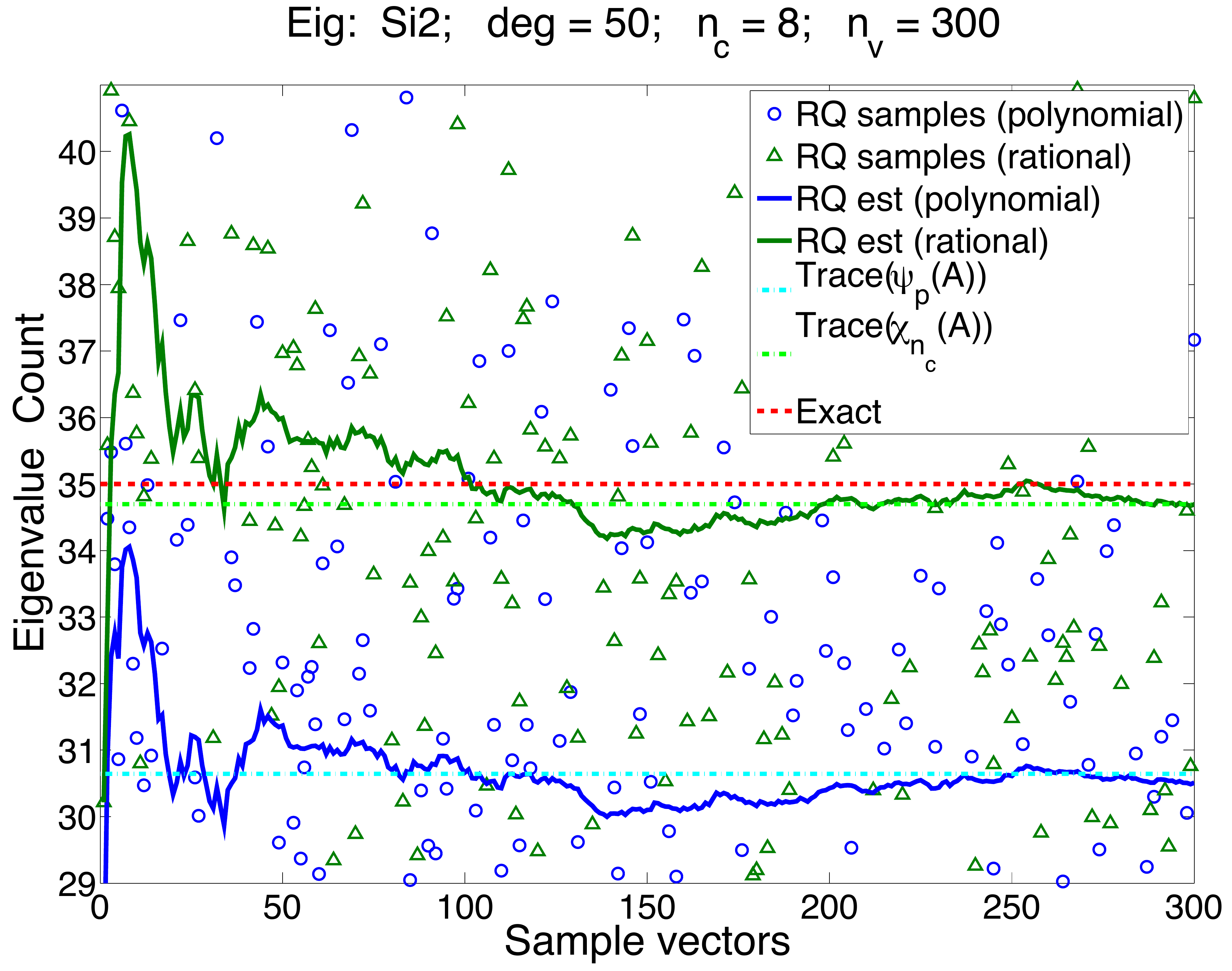}
                \caption{Small interval $\ab$ including $\lambda_5$ 
                    up to $\lambda_{40}$.}
                \label{fig:intrv1}
  \end{subfigure}
  \begin{subfigure}[b]{0.495\textwidth}
                \centering
                \includegraphics[width=\textwidth,height=0.808\textwidth]{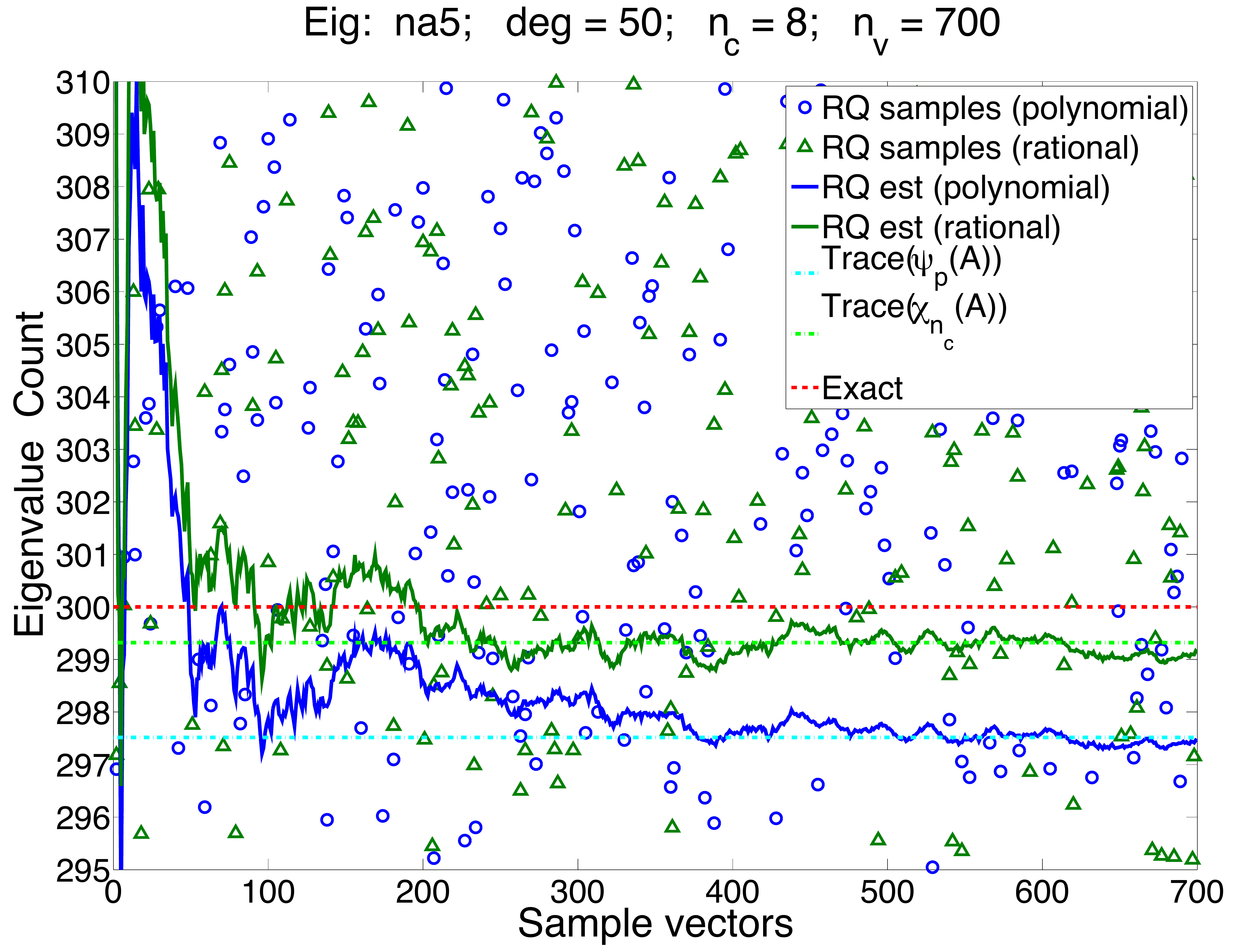}
                \caption{Large interval $\ab$ including $\lambda_{100}$ up to
                    $\lambda_{400}$.}
                \label{fig:intrv2}
  \end{subfigure}
  \caption{Behavior the Rayleigh Quotient estimator when
     the number of sample vectors $n_v$ varies. Two distinct test cases
 are considered with 
 both the polynomial and rational expansion techniques.
 (a):  Case of a small problem
 ($n=789$) and an interval containing a small number of
  eigenvalues.   (b): Both the system size and number of
  eigenvectors are one order of magnitude larger. In each plot the
  random generated vectors are the same for both estimators and the
  straight color lines are the exact traces of the approximate
  projector.}
  \label{fig:estvsNv}
\end{figure}

In both plots of Fig.~\ref{fig:estvsNv} the jagged lines representing
the estimates quickly reduce their wide oscillations and remain close
to the exact trace of the approximated $P$. The value of $n_v$ for
which the range of the oscillations from one sample vector to the next
become consistently less than one is between $50 \leq n_v \leq 100$
for Fig.~\ref{fig:intrv1} and $200 \leq n_v \leq 300$ for
Fig.~\ref{fig:intrv2}. These two typical examples illustrate that, for
intervals containing a small number of eigenvalues, a minimum number
of vectors $n_v \sim \trace(P)$ may be needed. In fact a good estimate
for the trace is based on having enough sample vectors $v_k$ to
represent almost entirely the subspace generated by the eigenpairs in
the interval $\ab$ and such a number is exactly $\trace(P)$.

When the number of eigenvalues in $\ab$ is larger than a few dozens,
the necessary number of sample vectors can often be lower $n_v
\lesssim \trace(P)$.  In general, for the same approximated projector,
the minimum value of $n_v$ varies slightly every time a different set
of sample vectors is selected, but overall the experimental bound
differs from the theoretical one given by lemma \ref{th:toledo} having
a much smaller value for the pre-factor of $\trace(P)$.

While this conclusion seems natural due to the nature of the
expansions, it may introduce a practical difficulty in estimating a
priori the minimum number of sample vectors necessary for the
estimation of the trace. This difficulty can be overcome by computing
the estimator incrementally and monitoring the value of the increment
over a small range of previous samples (typically no more than 10).
When such an increment remains substantially constant and less than
one, the estimated value for the trace can be safely said to have
converged.

\subsection{Rational approximation filtering: Direct vs iterative solvers}
\label{sec:dvsi}
We ran tests with the same Na5 example which was used in
Sec.~\ref{sec:jackpoly}.  When employing exact factorizations for each
integration point and $n_v=40$ sample vectors, the eigenvalue count is
$\mu_{\ab} = 98.64$ for $n_c = 3$ and $\mu_{\ab} = 100.27$ for
$n_c = 5$. Clearly more accurate quadrature rules will yield
more accurate counts.  Note however, that much faster results can be
obtained using the Sylvester inertia approach since this requires
only two direct factorizations (in real arithmetic).

We can avoid costly direct factorizations and make use of iterative
solvers to improve efficiency. For the Na5 example with same
parameters as above, we solved the linear systems using GMRES with
residual norm equal to $10^{-1}$, $10^{-2}$ and $10^{-3}$ and obtained
a trace estimate equal respectively to 75.35, 97.61 and 98.65.  As
mentioned in Sec.~\ref{sec:cons} a minimum accuracy for the iterative
solver is necessary but we observe that the estimates are reasonably
good starting with a high value for the residual threshold
(e.g. $10^{-2}$ for GMRES) when compared with the direct case. Using
$n_c=3$, Fig.~\ref{fig:errordirectgmres} shows the absolute errors on
$\mu_{\ab}$ obtained by comparing direct factorizations and GMRES with
$10^{-2}$ and $10^{-3}$ residual threshold. Results based on the
iterative solver are found in extremely good agreement with those of
the direct factorization, and this agreement is expected to be
preserved independently of the number of integration points.

When there is no preconditioning, the iterative procedure requires
only matrix-vector multiplications with the matrix $A$. Therefore,
using an iterative solver is a promising method for handling very
large systems. In order to further enhance the performance, one can
think of two options to be developed: (i) the generation of a single
Krylov subspace common to all integration points; (ii) the use of
(cheap) preconditioners.

\begin{figure}[hbt]
\centerline{ 
\includegraphics[width=0.5\textwidth,height=0.4\textwidth]{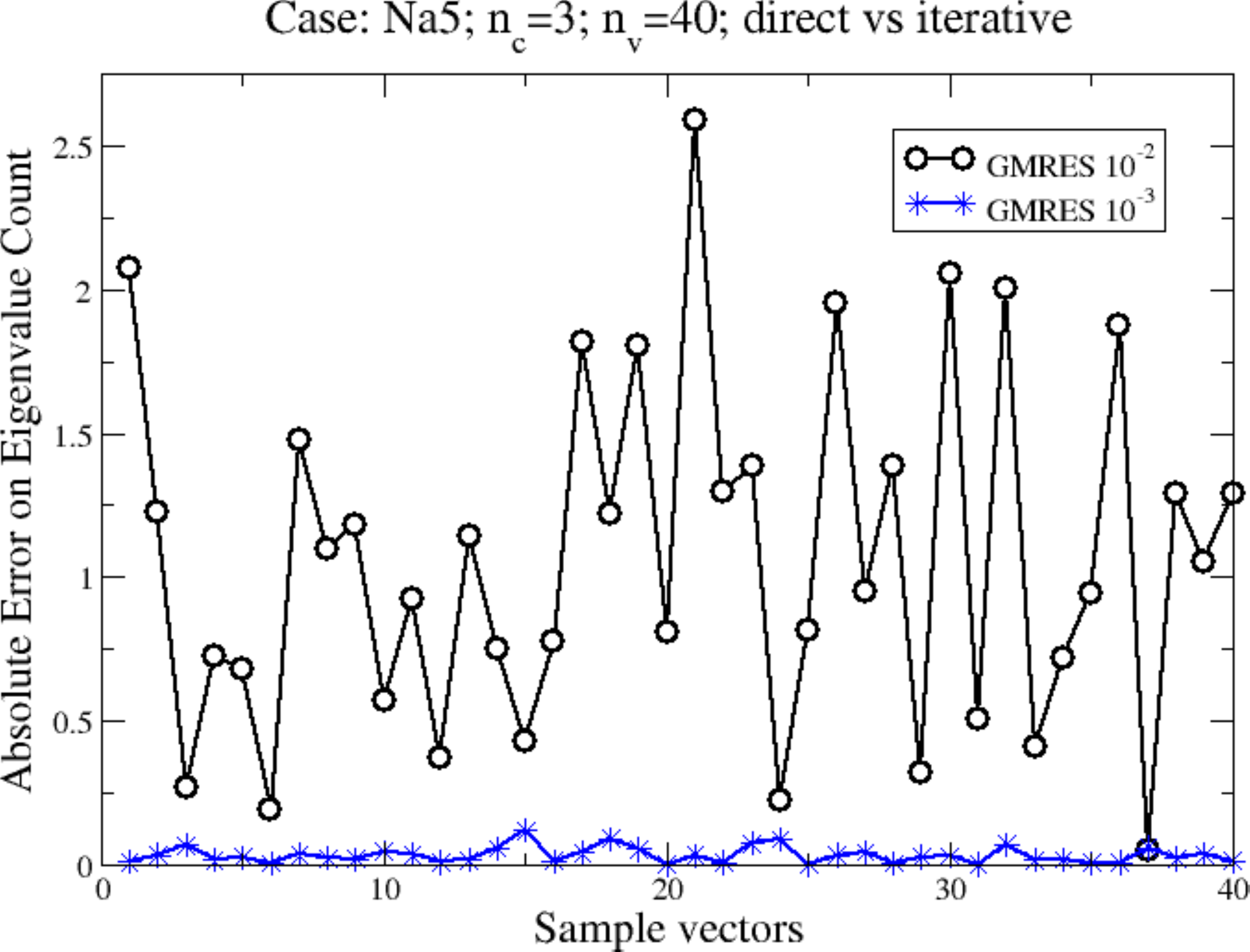}} 
\caption{Absolute 
errors on the eigenvalue count 
between results obtained with direct factorization and 
GMRES using both $10^{-2}$ and $10^{-3}$ convergence criteria. 
This absolute error (i.e. $|\mu_{\ab}^{Direct}-\mu_{\ab}^{GMRES}|$) decreases to zero
using smaller GMRES residual thresholds.
\label{fig:errordirectgmres}}
\end{figure}

We also tested the polynomial and rational filtering methods on a
generalized eigenvalue problem corresponding to a 2-D FEM simulation
matrix and available from the FEAST package \cite{FEASTsolver}.  In
this case the matrices $A, B$ have size $n = 12,450$. The number of
nonzero entries in both $A$ and $B$ is $nnz = 86,808$.  The number of
eigenvalues inside the desired interval is 100 (i.e. 100 lowest
eigenvalues).  On the left plot of Fig.~\ref{fig:Gen2D} the varying
average of $\mu_{\ab}$, obtained with a degree 100 polynomial
filtering, shows an almost perfect agreement with the exact count.

For the rational filtering method we run tests with $n_c = 5$, using a
direct solver and GMRES with and without preconditioning (denoted by
GMRES and P-GMRES respectively). Comparison of these approaches are
shown on the right plot of Fig.~\ref{fig:Gen2D}.  For clarity the
figure now omits the small circles corresponding to Rayleigh quotients
of each sample and shows only the running mean. We tested the
preconditioned GMRES with an ILU factorization using a drop tolerance
of $\texttt{droptol} = 0.01$ and a pivoting threshold of 0.05 as
defined by the \MATLAB\ \texttt{ilu} function with its default
reordering. For all the runs with P-GMRES we used a restart dimension
of 20 and limited the number of steps to 200.  The iteration is
stopped when the residual norm drops by a preset threshold
tolerance (\texttt{tol}).  Results with $\texttt{tol} = 10^{-2}$ are
shown.  Other results obtained with GMRES and P-GMRES with
$\texttt{tol} = 10^{-3}$ were quasi-identical with those obtained with
the direct solver and are omitted.

The curves show an underestimation of the eigenvalue counts for both
GMRES and P-GMRES.  Remarkably, each of these two curves deviates from
that of the exact solver by nearly a constant, indicating the
existence of a bias for the case of insufficient accuracy. This
observation is not easy to explain and merits further investigation.

The better accuracy afforded by the rational approximation approach
comes at a high memory cost.  The total number of nonzero elements for
all the LU factorizations generated for the 5 integration points for
this case is $nnz_{LU total} = 22,239,530$, an enormous amount
relative to the original number of nonzero entries which is under
$100,000$. In contrast, when the ILU factorization is used for $n_c =
5$ and under the conditions of the above experiment the total number
of nonzero elements used for all 5 ILU factorizations drops to
$nnz_{LU total} = 799,143$, about 28 times smaller. This is a 2-D
problem. For a 3-D problem a direct approach becomes unfeasible on a
standard workstation and using a preconditioned iterative solver is
the only option.

\begin{figure}[hbt]
\begin{center}
 \includegraphics[width=0.495\textwidth,height=0.4\textwidth]{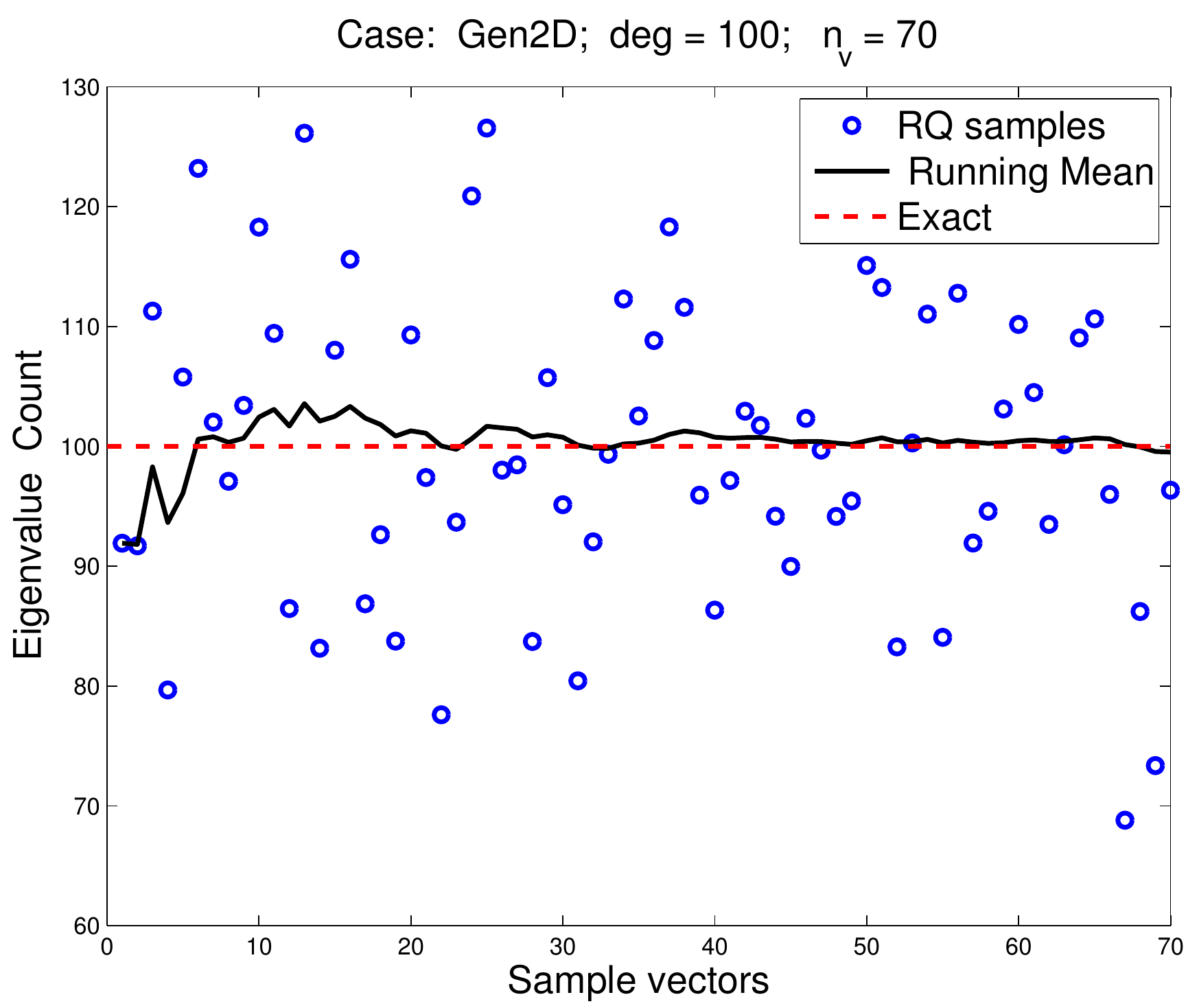} 
 \includegraphics[width=0.495\textwidth,height=0.4\textwidth]{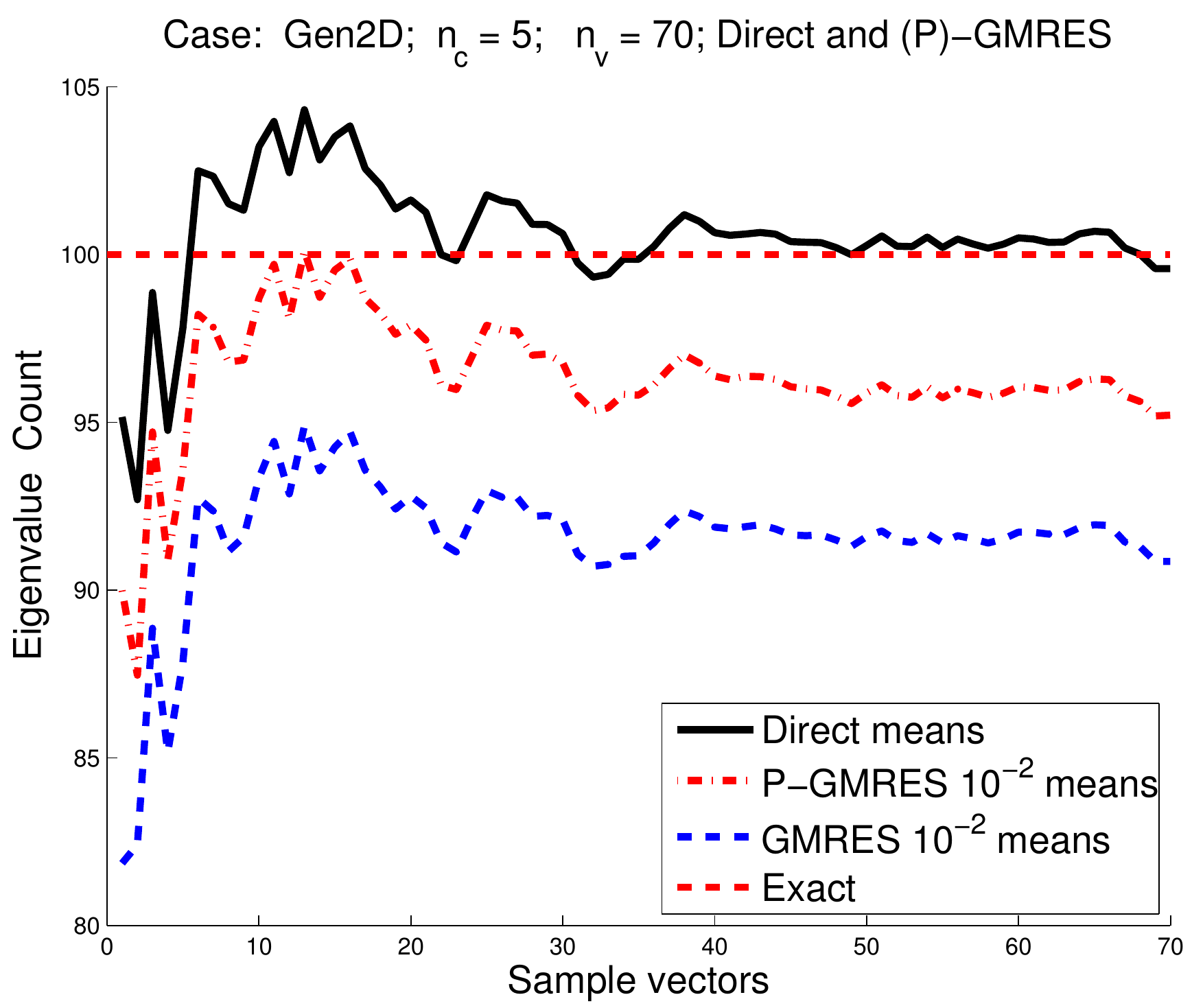}
\end{center} 
 \caption{(Standard) Chebyshev vs rational approximation for 
 the Gen2D matrix. The sequences of 
 random vectors used in all the experiments are identical.
 \label{fig:Gen2D}}
 \end{figure}

\subsection{Using estimated eigenvalue counts in  FEAST}
As mentioned in Sec.~\ref{sec:ratGen}, the convergence rate of the
FEAST subspace iterations mainly depends on the size $M_0$ of the
search subspace and the value of the rational function at
$\lambda_{M_0+1}$ (i.e.  $\chi_{n_c}(\lambda_{M_0+1})$ in
(\ref{eq:feast_ratio})).
In Fig.~\ref{fig:gauss}, for example, we note for the case $n_c = 8$
and the search interval $\lambda\in[-1, \ 1]$ that the rational
function is equal to $\sim 10^{-4}$ for $\lambda = \pm1.5$.  As a
result, if the search subspace size $M_0$ is taken large enough to
include all eigenvalues between $[-1.5, \ 1.5]$, one can expect the
residuals of the eigenpairs within $[-1, \ 1]$ to converge with the
same (linear) rate of $10^{4}$ along the FEAST subspace iterations
(while the eigenvalues should converge with a linear rate of
$(10^{4})^2=10^{8}$).  

It is important to note that the same convergence rate applies to any
arbitrary intervals $\ab$ if one chooses $M_0$ equal to the eigenvalue
count inside a larger interval $[a-\alpha, \ b+\alpha]$ with
$\alpha=(b-a)/4$. This eigenvalue count within the larger interval
can be estimated, in turn, using a reduced number of integration
points that can take advantage of iterative solvers with modest
residuals (as discussed in Sec.~\ref{sec:dvsi}).  The results obtained
in Table \ref{tab:feastruns} for the Na5 example, illustrate how an
appropriate estimate on a larger interval can be used by FEAST to
guarantee a certain degree of convergence rate for the interior
eigenpairs.
\begin{table}[htbp]
\begin{center}
\begin{small}
\begin{tabular}{c|cll|cll}
   \multicolumn{1}{c}{}    & \multicolumn{3}{c}{$n_c=8$} & \multicolumn{3}{c}{$n_c=5$} \\ \hline\hline
Iteration     & \# eigenvalue & residual & conv. rate &  \# eigenvalue & residual & conv. rate  \\
0   &    62   &   1.96$\times10^{-1}$ &                    & 65 & 2.06$\times10^{-1}$ &  \\
1   &    59   &   1.58$\times10^{-7}$ &                    & 59 &1.18$\times10^{-3}$ &   \\
2   &    59   &   8.78$\times10^{-11}$ & 1.8$\times10^{4}$  & 59 & 3.89$\times10^{-5}$ & 3.0$\times10^{1}$   \\
3   &    59   &   1.46$\times10^{-14}$ & 6.0$\times10^{4}$  & 59 & 6.48$\times10^{-8}$ & 6.0$\times10^{2}$  \\
4    &         &            &                             & 59 &1.30$\times10^{-11}$ & 5.0$\times10^{3}$   \\
5    &         &            &                             & 59 & 5.02$\times10^{-14}$ & 2.6$\times10^{2}$    \\ \hline \hline
\end{tabular}
\end{small}
\caption{\label{tab:feastruns} Convergence results obtained using
  FEAST v2.1 for a new interval $\ab$ ($a=1.38695..$ and
  $b=1.88290..$) containing 59 eigenvalues and where the subspace size
  has been estimated at $100$ by counting the eigenvalues between
  $[a-(b-a)/4, \ b+(b-a)/4]$.  Two runs using $n_c=8$ and $n_c=5$
  integration points are considered, and the convergence rates are
  provided from the moment the number of interior eigenvalues
  stabilizes.}
\end{center}
\end{table}
In particular, the results show a linear convergence rate for $n_c=8$
in agreement with the expected $4\times 10^{4}$ that can be directly
obtained from a reading of the data in Fig.~\ref{fig:gauss} (at
$\lambda=\pm1.5$).  Moreover, the value of the rational function using
$n_c=5$ indicates an expected convergence rate of $2\times 10^{2}$
while the reported rates in Table \ref{tab:feastruns} appears to be in
agreement or much better after few iterations.


\subsection{Rational approximation filtering for non-symmetric problems}
 
Since the eigenvalues of any projector $P$, whether orthogonal or not,
are equal to either zero or one, its trace is always equal to the
number of its nonzero eigenvalues, which in the case under
consideration is the number of eigenvalues located inside the contour
integral. The stochastic estimator also works for non-symmetric
matrices. Therefore, the whole technique extends to non-Hermitian case
without any difference. The contour integration approach for computing
the eigenpairs in a given region of the complex plane has also been
successfully utilized within the framework of the FEAST solver
\cite{laux}.

Here we present some preliminary results on the applicability of the
eigenvalue count estimate (\ref{eq:estR}) for non-symmetric problems.
In order to illustrate the accuracy of this 
approach, we select the complex symmetric matrix 'qc324' available
from the Matrix Market
\footnote{\url{http://math.nist.gov/MatrixMarket/}} which is of size
$n = 324$. Using a circle centered at (0,0) with radius 0.04, the
exact (complex) eigenvalue count is 37. In order to estimate the trace
${\trace(P)}$ one generates complex random vectors $v_k, k=1,..,n_v$
with entries $\pm 1$ for both the real and imaginary parts. The
imaginary part of the trace is expected to be very small, so the
estimates are obtained using the real part of the trace in our
simulation results.  Fig.~\ref{fig:qc324} presents the estimates
obtained using two different contour integrations.
\begin{figure}[hbt]
\centerline{ 
\includegraphics[width=0.5\textwidth,height=0.4\textwidth]{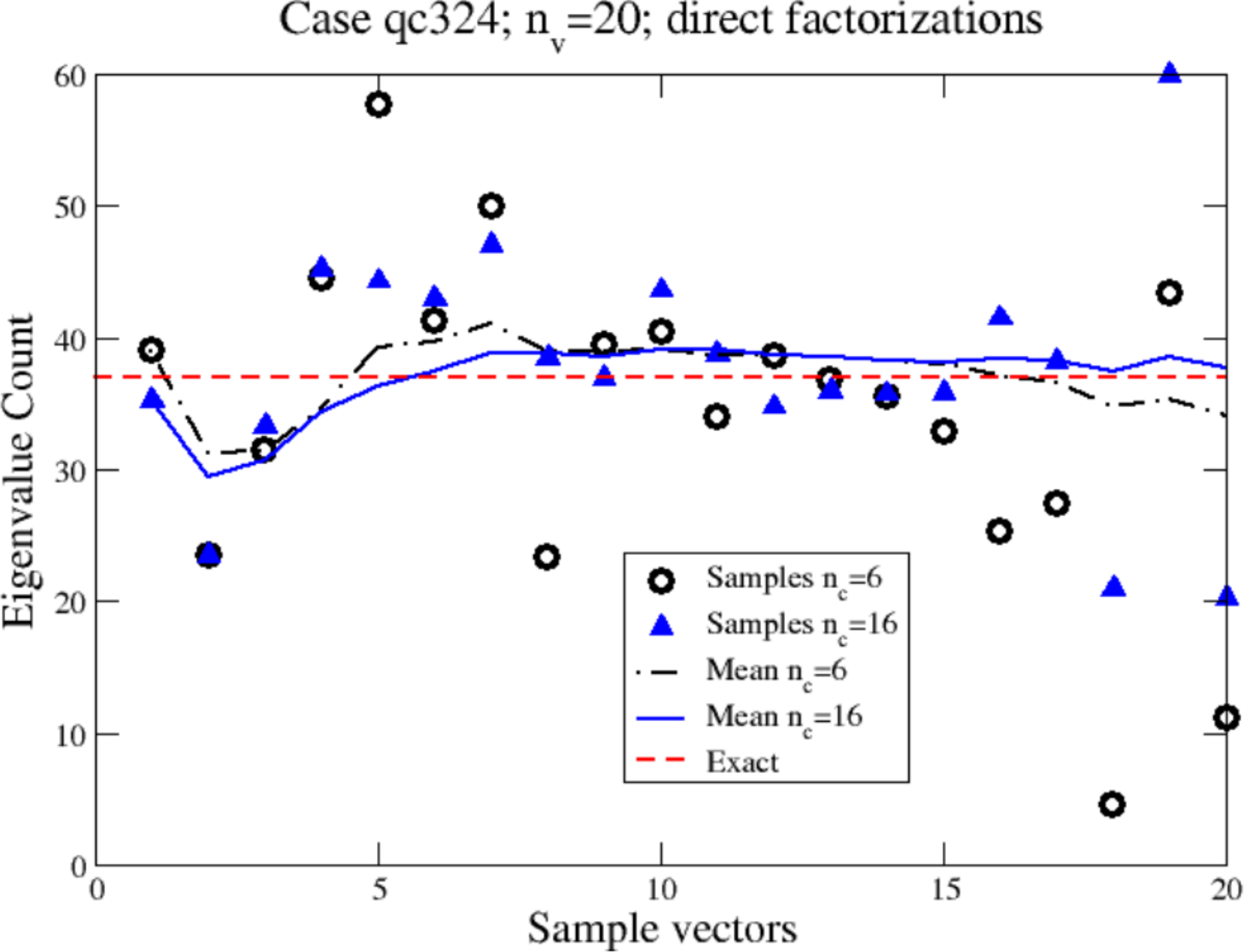}} 
\caption{The rational approximation method at work for the qc324 
matrix using a direct solver.
The plots compare a run using 6 and 16 integration points for the 
whole contour (obtained by placing 
respectively 3 and 8 Gauss integration points in the half-circle) and 20
sample vectors. 
\label{fig:qc324}}
\end{figure}
One notes that $n_c=6$ (i.e. Gauss-3 for each half-circle) already 
provides some reasonable estimates.
Further work would be needed to report a detailed study of the accuracy of the approach, but 
this preliminary result shows promise on the potential extension of
the estimated eigenvalue counts to the complex plane.

\section{Conclusion}\label{sec:concl} 
The methods  presented in this paper rely on a compromise
between accuracy and speed. If one is interested in an exact
count, then clearly combining  the Sylvester inertia theorem
with some direct solution method may be the best option, although
this may be too costly or even impractical in some situations.
Otherwise, a stochastic estimate based on approximating the trace
of the eigen-projector by exploiting a polynomial or rational function
expansion, will be sufficient.
The rational approximation viewpoint may be perfectly suitable 
within the context  of a package like FEAST since 
(approximate) factorizations will be needed at the outset anyway.
The initialization of the package will begin by estimating
the eigenvalue count in order to determine the proper subspace 
size $M_0$ to use. The additional cost of this step then remains
relatively small and its use may lead to great savings.
In other applications the polynomial approximation can give
a good estimate at a relatively low cost.

Since the  proposed methods are all  based on an  approximation of the
spectral  projector,  they  are  subject  to a  slight  bias  if  this
approximation is not accurate enough.  This bias may be exacerbated by
the presence of clusters near  the interval boundaries, since it is
generally near these locations that the inaccuracies of the 
approximate projector are large. 
Finding reliable methods to reduce this bias 
remains an open issue that is worth investigating.

\bibliographystyle{siam}

\bibliography{strings,saad,local}

\end{document}